\newtheorem{theorem}{Theorem}[section]
\newtheorem{lemma}[theorem]{Lemma}
\newtheorem{proposition}[theorem]{Proposition}
\newtheorem{corollary}[theorem]{Corollary}
\theoremstyle{definition}
\newtheorem{definition}[theorem]{Definition}
\newtheorem{example}[theorem]{Example}
\numberwithin{equation}{section}
\begin{document}
	\title[Some inequalities on $h$-convex functions]{Some inequalities on $h$-convex functions}
	
	\author[ Abbasi, Morassaei, Mirzapour]{M. Abbasi$^1$, A. Morassaei$^1$ and F. Mirzapour$^1$}
	\vspace{-2cm}
	\address{$^{1}$ Department of Mathematics, Faculty of Sciences, University of Zanjan,  University Blvd., Zanjan 45371-38791, IRAN.}
	
	\email{mostafa.abasi5754@gmail.com}\email{morassaei@znu.ac.ir}\email{f.mirza@znu.ac.ir}
	
	\subjclass[2010]{Primary 47A63; Secondary 26D15.}
	
	\keywords{$h$-convex function; Jensen-Mercer inequality; operator inequality; Hilbert space.}
\begin{abstract}
In this paper, we state some characterizations of $h$-convex function is defined on a convex set in a linear space. By doing so, we extend the Jensen-Mercer inequality for $h$-convex function. We will also define $h$-convex function for operators on a Hilbert space and present the operator version of the Jensen-Mercer inequality. Lastly, we propound the complementary inequality of Jensen's inequality for $h$-convex functions.
\end{abstract} \maketitle

\section{Introduction}
Assume that $I$ is an interval in $\mathbb{R}$. Let us recall definitions of some special classes of functions.

We say that \cite{GL} $f:I\to\mathbb R$ is a Godunova-Levin function, or that $f$ belongs to the class $Q(I)$ if $f$ is non-negative and for all $x, y\in I$ and $t\in(0,1)$ we have
$$
f(t x+(1-t)y) \le \frac{f(x)}{t}+\frac{f(y)}{1-t}\,.
$$
For $s\in(0, 1]$, a function $f:[0, \infty)\to[0, \infty)$ is said to be $s$-convex function, or that $f$ belongs to the class $K_s^2$, if 
$$
f(t x+(1-t)y) \le t^sf(x)+(1-t)^sf(y)
$$
for every $x, y\in [0, \infty)$ and $t\in[0, 1]$ (see \cite{Bre}). Also, we say that $f:I\to[0, \infty)$ is a $P$-function \cite{DPP}, or that $f$ belongs to the class $P(I)$, if for all $x, y\in I$ and $t\in[0, 1]$ we have 
$$
f(t x+(1-t)y) \le f(x)+f(y)\,.
$$
Throughout this paper, suppose that $I$ and $J$ are intervals in $\mathbb{R}$, $(0, 1)\subseteq J$ and functions $h$ and $f$ are real non-negative functions defined on $J$ and $I$, respectively. 

In \cite{VAR}, Varo\v{s}anec defined  the $h$ -convex function as follows:\\
Let $h:J\subseteq\mathbb{R}\to\mathbb{R}$ be a non-negative function, $h\not\equiv 0$. We say that $f:I\to\mathbb{R}$ is a $h$-convex function, or that $f$ belongs to the class $SX(h, I)$, if $f$ is non-negative and for all $x, y\in I$, $t\in(0, 1)$ we have 
\begin{equation}\label{eq1111}
f(t x+(1-t)y) \le h(t)f(x)+h(1-t)f(y)\,.
\end{equation}
If inequality \eqref{eq1111} is reversed, then $f$ is said to be $h$-concave, that isو $f\in SV(h, I)$.

If $h(t)=t$, then all non-negative convex functions belong to $SX(h, I)$  and all non-negative concave functions belong to $SV(h, I)$. If $h(t)=\frac{1}{t}$, then $SX(h, I)=Q(I)$; if $h(t)=1$, then $SX(h, I)\supseteq P(I)$; and if $h(t)=t^s$, where $s\in(0, 1)$, then $SX(h, I)\supseteq K_s^2$.

A function $h:J\to\mathbb{R}$ is said to be a \textit{super-additive function} if 
\begin{equation}\label{SA}
h(x+y) \ge h(x)+h(y)\,,
\end{equation}
for all $x, y\in J$. If inequality \eqref{SA} is reversed, then $h$ is said to be a \textit{sub-additive function}. If the equality holds in \eqref{SA}, then $h$ is said to be an \textit{additive function}.

Function $h$ is called a \textit{super-multiplicative function} if 
\begin{equation}\label{SM}
h(xy) \ge h(x)h(y)\,,
\end{equation}
for all $x, y\in J$ \cite{VAR}. If inequality \eqref{SM} is reversed, then $h$ is called a \textit{sub-multiplicative function}. If the equality holds in \eqref{SM}, then $h$ is called a \textit{multiplicative function}.
\begin{example}\cite{VAR}
	Consider the function $h:[0, +\infty)\to\mathbb{R}$ by $h(x)=(c+x)^{p-1}$. If $c=0$, then the function $h$ is multiplicative. If $c\ge 1$, then for $p\in(0, 1)$ the function $h$ is super-multiplicative and for $p>1$ the function $h$ is sub-multiplicative.
\end{example}
%%%%%%%%%%%%%%%%%%%%%%%%%%%%%%%%%%%%%%%%%%%%%%%%%%%%%%%%%%%%%%%%%%%%%
\section{Preliminaries}
In what follows we assume that $\mathcal H$ and $\mathcal K$ are Hilbert spaces, $\mathbb B(\mathcal H)$ and $\mathbb B(\mathcal K)$ are $C^*$-algebras of all bounded linear operators on the appropriate Hilbert space with identities $I_{\mathcal H}$ and $I_{\mathcal K}$, $\mathbb{B}_h(\mathcal{H})$ denotes the algebra of all self-adjoint operators in $\mathbb{B}(\mathcal{H})$. An  operator $A\in\mathbb{B}_h(\mathcal{H})$ is called \textit{positive}, if $\langle Ax,x\rangle\ge0$ holds for every $x\in \mathcal{H}$ and then we write $A\ge0$. For $A,B\in\mathbb{B}_h(\mathcal{H})$, we say $A\le B$ if $B-A \ge 0$. We write $A>0$ and say $A$ is \textit{strictly positive operator}, if $A$ is a positive invertible operator. Let
$f$ be a continuous real valued function defined on an interval $I$. The function
$f$ is called \textit{operator monotone} if $A\le B$ implies $f(A)\le f(B)$ for all $A,B$ with
spectra in $I$. A function $f$ is said to be \textit{operator convex} on $I$ if
$$
f(t A + (1 - t)B) \le t f(A) + (1 - t)f(B)\,,
$$
for all $A,B\in\mathbb{B}_h(\mathcal{H})$ with spectra in $I$ and all $t\in[0,1]$.
A map $\Phi:\mathbb{B}(\mathcal{H})\to\mathbb{B}(\mathcal{K})$ is called \textit{positive} if $\Phi(A)\geq 0$, whenever $A\geq 0$ and is said to be \textit{normalized} if $\Phi(I_\mathcal{H})=I_\mathcal{K}$.
We denote by $\mathbf{P}[\mathbb{B}(\mathcal{H}),\mathbb{B}(\mathcal{K})]$ the set of all positive linear maps $\Phi:\mathbb{B}(\mathcal{H})\to\mathbb{B}(\mathcal{K})$ and by $\mathbf{P}_N[\mathbb{B}(\mathcal{H}),\mathbb{B}(\mathcal{K})]$ the set of all normalized positive linear maps $\Phi\in\mathbf{P}[\mathbb{B}(\mathcal{H}),\mathbb{B}(\mathcal{K})]$. If $\Phi \in \mathbf{P}_N[\mathbb{B}(\mathcal{H}),\mathbb{B}(\mathcal{K})]$ and $f$ is an operator convex function on an interval $I$, then
\begin{equation}\label{jen}
f(\Phi(A))\le\Phi(f(A))\,\quad\quad(\mbox{Davis-Choi-Jensen's inequality})
\end{equation}
for every self-adjoint operator $A$ on $\mathcal{H}$, whose spectrum is contained in $I$, see \cite{FMPS}.
%%%%%%%%%%%%%%%%%%%%%%%%%%%%%%%%%%%%%%%%%%%%%%%%%%%%%%%%%%%%%%%%%%%%%%%%%%%%%%%%%%%%%%
\section{Characterizations}
Assume that $C$ is a convex subset of a linear space $X$  and $f$ is an arbitrary real-valued function on $C$. The non-negative function $f:C\rightarrow\mathbb{R}$ is called \textit{$h$-convex function} on $C$, if $f(tx+(1-t)y)\le h(t)f(x)+h(1-t)f(y)$ for every $x,y \in C$ and $t\in[0, 1]$.

Let $x$ and $y$ be two fixed elements in $C$. Define the map $f_{x,y}$ as follows:
$$
f_{x,y}:[0,1]\rightarrow\mathbb{R}\,,\quad f_{x,y}(t)=f(tx+(1-t)y)\,.
$$
The following theorem is a characterization of $h$-convex functions.
\begin{theorem}[First characterization]
	With the above assumptions, the following statements are equivalent:
	\begin{enumerate}
		\item[(i)] $f$ is a $h$-convex function on $C$.
		\item[(ii)] The mapping $f_{x,y}$ is a $h$-convex function on $[0,1]$, for any $x,y\in C$.
	\end{enumerate}
\end{theorem}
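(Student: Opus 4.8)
The plan is to prove the two implications separately, each time reducing the statement to the defining inequality of $h$-convexity by an elementary reparametrisation.

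For $(i)\Rightarrow(ii)$, I would fix $x,y\in C$ and take arbitrary $s,r\in[0,1]$ and $t\in[0,1]$. Put $u=sx+(1-s)y$ and $v=rx+(1-r)y$; both lie in $C$ since $C$ is convex. The key bookkeeping step is the identity
$$
tu+(1-t)v=\bigl(ts+(1-t)r\bigr)x+\bigl(1-ts-(1-t)r\bigr)y,
$$
which one checks by expanding and using $t(1-s)+(1-t)(1-r)=1-\bigl(ts+(1-t)r\bigr)$. Consequently $f_{x,y}\bigl(ts+(1-t)r\bigr)=f(tu+(1-t)v)$, and applying the $h$-convexity of $f$ to the pair $u,v\in C$ with parameter $t$ yields
$$
f_{x,y}\bigl(ts+(1-t)r\bigr)=f(tu+(1-t)v)\le h(t)f(u)+h(1-t)f(v)=h(t)f_{x,y}(s)+h(1-t)f_{x,y}(r),
$$
which is precisely the $h$-convexity of $f_{x,y}$ on $[0,1]$; non-negativity of $f_{x,y}$ is inherited from that of $f$.

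For $(ii)\Rightarrow(i)$, I would fix $x,y\in C$ and $t\in[0,1]$ and simply specialise the $h$-convexity of $f_{x,y}$ to the endpoints $1$ and $0$. Since $f_{x,y}(1)=f(x)$ and $f_{x,y}(0)=f(y)$, this gives
$$
f(tx+(1-t)y)=f_{x,y}\bigl(t\cdot 1+(1-t)\cdot 0\bigr)\le h(t)f_{x,y}(1)+h(1-t)f_{x,y}(0)=h(t)f(x)+h(1-t)f(y).
$$
Non-negativity of $f$ on $C$ follows by taking $t=1$ (so that $f_{x,y}(1)=f(x)\ge 0$) and letting $x$ range over $C$.

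As for the difficulty: there is essentially no serious obstacle here. The only point that requires care is the affine identity in the first implication, namely that a $t$-weighted convex combination of the two points $u,v$ lying on the segment from $y$ to $x$ is again a point of that segment whose barycentric coordinate equals the corresponding convex combination $ts+(1-t)r$ of the coordinates of $u$ and $v$. Once this routine computation is recorded, both directions of the equivalence are one line each.
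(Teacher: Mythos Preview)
Your proof is correct and follows essentially the same approach as the paper's own proof: for $(i)\Rightarrow(ii)$ you use the affine identity to rewrite $f_{x,y}(ts+(1-t)r)$ as $f$ evaluated at a convex combination of two points of $C$ and then apply $h$-convexity of $f$, and for $(ii)\Rightarrow(i)$ you specialise the $h$-convexity of $f_{x,y}$ to the endpoints $0$ and $1$. The only differences are notational (you name the intermediate points $u,v$ and use parameters $s,r,t$ where the paper uses $t_1,t_2,\alpha$), and you add explicit remarks about non-negativity that the paper leaves implicit.
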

\begin{proof}
	First, assume that (i) holds. Let $\alpha, \beta\in[0,1]$ such that $\alpha+\beta=1$ and $t_1, t_2\in[0, 1]$. Hence 
	\begin{align*}
	f_{x,y}(\alpha t_1+\beta t_2)=& f\big((\alpha t_1+\beta t_2)x+(1-\alpha t_1-\beta t_2)y\big)\\
	=&f\Big(\alpha\big(t_1x+(1-t_1)y\big)+\beta\big(t_2x+(1-t_2)y\big)\Big)\\
	\le & h(\alpha)f\big(t_1x+(1-t_1)y\big)+h(\beta)f\big(t_2x+(1-t_2)y\big)\\
	=& h(\alpha)f_{x,y}(t_1)+h(\beta)f_{x,y}(t_2)\,,
	\end{align*}
	this means that $f_{x,y}$ is a $h$-convex function on $[0,1]$.
	
	Conversely, suppose that (ii) holds. For $t\in[0, 1]$ and $x, y\in C$, we have
	\begin{align*}
	f\big(tx+(1-t)y\big)=&f_{x,y}(t)=f_{x,y}\big((1-t)0+t1\big)\\
	\le & h(1-t)f_{x,y}(0)+h(t)f_{x,y}(1)\\
	=& h(1-t)f(y)+h(t)f(x)\,,
	\end{align*}
	that is, $f$ is a $h$-convex function on $C$.
\end{proof}
%--------------------------------------------------------------------
Now, for fixed $t\in[0, 1]$, we define the function $f_t:C^2\rightarrow\mathbb{R}$ by $f_t(x, y)=f(tx+(1-t)y)$. 

In the next theorem, we state a new characterization of $h$-convex functions.
%--------------------------------------------------------------------
\begin{theorem}[Second characterization]
	The following statements of $h$-convex functions hold:
	\begin{enumerate}
		\item[(i)] If $f$ is a $h$-convex function on $C$, then $f_t$ is a $h$-convex function on $C^2$ for every $t\in[0, 1]$.
		\item[(ii)] If $C$ is a cone in $X$ and $f_t$ is a $h$-convex function on $C^2$ for every $t\in(0, 1)$, then $f$ is a $h$-convex function on $C$.
	\end{enumerate}
\end{theorem}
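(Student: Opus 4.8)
The plan is to prove the two implications by substituting into the defining inequality and regrouping the affine combinations; the only real idea needed is the choice of a suitable decomposition in part (ii).

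For (i): note first that $C^2$, viewed inside the linear space $X\times X$, is convex, so ``$f_t$ is $h$-convex on $C^2$'' is meaningful, and convex combinations in $C^2$ are formed coordinatewise. Fix $t\in[0,1]$, take $(x_1,y_1),(x_2,y_2)\in C^2$ and $\lambda\in[0,1]$, and compute
\begin{align*}
f_t\big(\lambda(x_1,y_1)+(1-\lambda)(x_2,y_2)\big)
&=f\big(t(\lambda x_1+(1-\lambda)x_2)+(1-t)(\lambda y_1+(1-\lambda)y_2)\big)\\
&=f\big(\lambda(tx_1+(1-t)y_1)+(1-\lambda)(tx_2+(1-t)y_2)\big)\\
&\le h(\lambda)f\big(tx_1+(1-t)y_1\big)+h(1-\lambda)f\big(tx_2+(1-t)y_2\big)\\
&=h(\lambda)f_t(x_1,y_1)+h(1-\lambda)f_t(x_2,y_2),
\end{align*}
the inequality being $h$-convexity of $f$ at the points $tx_1+(1-t)y_1$ and $tx_2+(1-t)y_2$, which lie in $C$ because $C$ is convex. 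This is exactly $h$-convexity of $f_t$, and no hypothesis on $C$ beyond convexity is used.

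For (ii): the key point is that $f_t(x,y)=f(tx+(1-t)y)$ for the \emph{same} parameter $t$, so it suffices to display $tx+(1-t)y$ as the value of $f_t$ at a $t$-weighted convex combination whose two endpoints have $f_t$-values $f(x)$ and $f(y)$. Since $C$ is a cone (hence closed under multiplication by nonnegative scalars, so $0\in C$) and $t\in(0,1)$, the vectors $\tfrac1t x$, $\tfrac1{1-t}y$ and $0$ all belong to $C$, so $\big(\tfrac1t x,0\big),\big(0,\tfrac1{1-t}y\big)\in C^2$, and
$$t\Big(\tfrac1t\, x,\,0\Big)+(1-t)\Big(0,\,\tfrac1{1-t}\, y\Big)=(x,y),$$
while $f_t\big(\tfrac1t x,0\big)=f(x)$ and $f_t\big(0,\tfrac1{1-t}y\big)=f(y)$. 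Applying $h$-convexity of $f_t$ to these two points with weight $t$ yields
$$f(tx+(1-t)y)=f_t(x,y)\le h(t)f_t\Big(\tfrac1t x,0\Big)+h(1-t)f_t\Big(0,\tfrac1{1-t}y\Big)=h(t)f(x)+h(1-t)f(y)$$
for every $t\in(0,1)$, which is $h$-convexity of $f$ (the range $t\in(0,1)$ being exactly that of Varo\v{s}anec's definition \eqref{eq1111}).

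I expect the only genuine step to be spotting the decomposition of $(x,y)$ in part (ii); the rest is routine manipulation of affine combinations, and part (i) is purely mechanical. I would also remark that the cone hypothesis is not essential: fixing any single $s\in(0,1)$ and applying $h$-convexity of $f_s$ to the diagonal points $(x,x)$ and $(y,y)$ with weight $t$ gives $f(tx+(1-t)y)\le h(t)f(x)+h(1-t)f(y)$ directly, since $f_s(x,x)=f(x)$, $f_s(y,y)=f(y)$ and $t(x,x)+(1-t)(y,y)=(tx+(1-t)y,tx+(1-t)y)$. The cone-based argument is kept above because it more visibly uses the whole family $\{f_t\}_{t\in(0,1)}$ appearing in the statement.
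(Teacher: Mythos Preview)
Your proof is correct and essentially identical to the paper's: part (i) is the same direct computation (with different variable names), and part (ii) uses exactly the same decomposition $(x,y)=t\big(\tfrac1t x,0\big)+(1-t)\big(0,\tfrac1{1-t}y\big)$ enabled by the cone hypothesis. Your closing observation that the diagonal points $(x,x),(y,y)$ already yield the conclusion for a single fixed $s$, without any cone assumption, is a valid strengthening not present in the paper.
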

\begin{proof}
	(i) For fixed $t\in [0, 1]$ and $(x, y), (u, v)\in C^2$. Then for every $\alpha\in[0, 1]$ 
	\begin{align*}
	f_t\big(\alpha(x,y)+(1-\alpha)(u, v)\big)=&f_t\big(\alpha x+(1-\alpha)u,\alpha y+(1-\alpha)v\big)\\
	=& f\big(t(\alpha x+(1-\alpha)u)+(1-t)(\alpha y+(1-\alpha)v)\big)\\
	=& f\big(\alpha(tx+(1-t)y)+(1-\alpha)(tu+(1-t)v)\big)\\
	\le& h(\alpha)f\big(tx+(1-t)y\big)+h(1-\alpha)f\big(tu+(1-t)v\big)\\
	=&h(\alpha)f_t(x,y)+h(1-\alpha)f_t(u,v)\,,
	\end{align*}
	that is, $f_t$ is a $h$-convex function on $C^2$.
	
	(ii) Let $x,y \in C$ and $t\in(0, 1)$. Since $C$ is cone in $X$, $C+C\subseteq C$ and $\alpha C\subseteq C$ for every $\alpha\ge 0$, then $t^{-1}x, (1-t)^{-1}y\in C$ and $(t^{-1}x, 0),(0,(1-t)^{-1}y)\in C^2$. On the other hand, by $h$-convexity of $f_t$ on $C^2$, we have
	\begin{align*}
	f\big(tx+(1-t)y\big)=&f_t(x, y)\\
	=&f_t\big(t(t^{-1}x, 0)+(1-t)(0,(1-t)^{-1}y)\big)\\
	\le& h(t)f_t(t^{-1}x, 0)+h(1-t)f_t(0,(1-t)^{-1}y)\\
	=& h(t)f(x)+h(1-t)f(y)\,,
	\end{align*}
	therefore, $f$ is a $h$-convex function on $C$.
	
\end{proof}
%---------------------------------------------------------------------------
\begin{theorem}[Third characterization]
	Let $h$ be a strictly positive multiplicative function, then the following statements are equivalent:
	\begin{enumerate}
		\item[(i)] $f$ is a $h$-convex function.
		\item[(ii)] If $(1+s)x-sy\in C$, for every $x, y\in C$ and $s \ge 0$, then 
		\begin{equation}
		f\big((1+s)x-sy\big) \ge h(1+s)f(x)-h(s)f(y)\,.
		\end{equation}
	\end{enumerate}
\end{theorem}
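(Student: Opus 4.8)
The plan is to prove both implications by the same algebraic device: the point $(1+s)x-sy$ and an ordinary convex combination of two points are interchangeable by ``inverting'' the combination, and multiplicativity is then used to convert the weights $h(t),h(1-t)$ into $h(1+s),h(s)$. As a preliminary I would record that a strictly positive multiplicative $h$ satisfies $h(1)=1$ (from $h(1)=h(1\cdot 1)=h(1)^2$ together with $h(1)>0$), hence $h(1/r)=1/h(r)$ whenever $r$ and $1/r$ lie in $J$.

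For (i) $\Rightarrow$ (ii) I would fix $x,y\in C$ and $s\ge 0$ with $z:=(1+s)x-sy\in C$, and use the identity $x=\frac{1}{1+s}\,z+\frac{s}{1+s}\,y$, which exhibits $x$ as a convex combination of $z$ and $y$. Applying the $h$-convexity of $f$ with parameter $t=\frac{1}{1+s}\in(0,1]$ gives $f(x)\le h\big(\tfrac{1}{1+s}\big)f(z)+h\big(\tfrac{s}{1+s}\big)f(y)$; solving for $f(z)$ and substituting $h\big(\tfrac{1}{1+s}\big)^{-1}=h(1+s)$ and $h\big(\tfrac{s}{1+s}\big)h(1+s)=h\big(\tfrac{s}{1+s}\cdot(1+s)\big)=h(s)$ yields exactly $f(z)\ge h(1+s)f(x)-h(s)f(y)$.

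For (ii) $\Rightarrow$ (i) I would take $u,v\in C$ and $t\in(0,1)$ (the endpoints $t\in\{0,1\}$ being immediate from $f\ge 0$ and $h(1)=1$), put $w:=tu+(1-t)v\in C$, and use the mirror identity $u=(1+s)w-sv$ with $s:=\frac{1-t}{t}\ge 0$. Since $u\in C$, the hypothesis appearing in (ii) holds for the triple $(w,v,s)$, so (ii) gives $f(u)\ge h(1+s)f(w)-h(s)f(v)$; solving for $f(w)$ and using $h(1+s)^{-1}=h(1/t)^{-1}=h(t)$ and $h(s)h(t)=h\big(\tfrac{1-t}{t}\cdot t\big)=h(1-t)$ gives $f(w)\le h(t)f(u)+h(1-t)f(v)$, i.e.\ $f$ is $h$-convex.

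The computations are routine, and the genuinely essential idea is spotting the two reciprocal substitutions above; the point I would be most careful about is bookkeeping the domain of $h$, since applying multiplicativity at the arguments $1+s$, $\tfrac{s}{1+s}$ (and at $1/t$, $\tfrac{1-t}{t}$ in the converse) requires all of these to lie in $J$, so the standing hypothesis $(0,1)\subseteq J$ is tacitly being strengthened (to something like $J\supseteq(0,\infty)$). Once the admissibility of every invoked value of $h$ is granted, the argument is precisely this pair of substitutions.
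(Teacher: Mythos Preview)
Your proposal is correct and follows essentially the same approach as the paper: both directions use the identity $x=\tfrac{1}{1+s}\big((1+s)x-sy\big)+\tfrac{s}{1+s}y$ (and its inverse via $s=(1-t)/t$) together with multiplicativity of $h$ to pass between the weights $h(t),h(1-t)$ and $h(1+s),h(s)$. Your additional caveat about the domain of $h$ needing to contain arguments like $1+s$ is a valid observation that the paper leaves implicit.
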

\begin{proof}
	First, note that multiplicity of $h$ implies that $h\left(\frac{1}{t}\right)=\frac{1}{h(t)}$, for every $t > 0$ and $h(1)=1$.
	
	Assume that (i) holds. By using $x=\frac{1}{s+1}[(1+s)x-sy]+\frac{s}{s+1}y$, we have 
	\begin{align*}
	f(x)=& f\left(\frac{1}{s+1}[(1+s)x-sy]+\frac{s}{s+1}y\right)\\
	\le &h\left(\frac{1}{s+1}\right)f\big[(1+s)x-sy\big]+h\left(\frac{s}{s+1}\right)f(y)\\
	=&\frac{h(1)}{h(s+1)}f\big[(1+s)x-sy\big]+\frac{h(s)}{h(s+1)}f(y)\qquad(\text{by multiplicity of}~ h)
	\end{align*}
	and therefore,
	$$
	f\big[(1+s)x-sy\big] \ge h(1+s)f(x)-h(s)f(y)\,.
	$$
	Now, suppose that (ii) holds. If $\alpha, \beta\in[0, 1]$ and $\alpha+\beta=1$, then there exists $s \ge 0$ such that $\alpha=\frac{1}{s+1}$ and $\beta=\frac{s}{s+1}$. Put $z=\alpha x+\beta y$, hence $x=(1+s)z-sy$ and so 
	$$
	f(x)=f\big((1+s)z-sy\big) \ge h(1+s)f(z)-h(s)f(y)\,.
	$$
	Consequently, 
	\begin{align*}
	f(\alpha x+\beta y)= & f(z)\le \frac{1}{h(1+s)}f(x)+\frac{h(s)}{h(1+s)}f(y)\\
	= & h\left(\frac{1}{1+s}\right)f(x)+h\left(\frac{s}{1+s}\right)f(y)\\
	= & h(\alpha)f(x)+h(\beta)f(y)\,,
	\end{align*}
	this show that $f$ is $h$-convex function.
\end{proof}
%--------------------------------------------------------------
\begin{theorem}\label{t3.4}
	\begin{enumerate}
		\item[(i)] Assume that $X$ is a real vector space and $f:X\rightarrow\mathbb{R}$ is an even $h$-convex function. Then 
		\begin{align}\label{e333}
		\frac{f((1-2t)x)+f((2t-1)y)}{h(t)+h(1-t)}\le & f\big((1-t)x+ty\big)+f\big(tx+(1-t)y\big)\nonumber\\
		\le & [h(t)+h(1-t)][f(x)+f(y)]\,.
		\end{align}
		\item[(ii)] Let $X$ be a topological vector space, $h$ be an integrable strictly positive function and $f$ be a continuous even $h$-convex function, then
		\begin{equation}\label{e334}
		\frac{1}{2}\int_{0}^{1}[f(tx)+f(ty)]~\mathrm{d}t \le \int_0^1[h(t)+h(1-t)]f\big(tx+(1-t)y\big)~\mathrm{d}t\,.
		\end{equation}
		In addition, if $h$ is super-additive, then
		\begin{equation}\label{e335}
		\frac{1}{2h(1)\left(\int_0^1h(t)~\mathrm{d}t\right)}\int_0^1[f(tx)+f(ty)]~\mathrm{d}t \le f(x)+f(y)\,.
		\end{equation}
	\end{enumerate}
\end{theorem}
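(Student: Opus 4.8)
The plan is to derive the four inequalities in order, each time using only the defining inequality $f(tx+(1-t)y)\le h(t)f(x)+h(1-t)f(y)$ of $h$-convexity together with the evenness $f(-z)=f(z)$, and then to integrate in Part (ii). The upper bound in \eqref{e333} is immediate: apply $h$-convexity to the pair $x,y$ twice, once with parameter $t$ and once with parameter $1-t$, giving $f(tx+(1-t)y)\le h(t)f(x)+h(1-t)f(y)$ and $f((1-t)x+ty)\le h(1-t)f(x)+h(t)f(y)$, and add; the two right-hand sides add up to exactly $[h(t)+h(1-t)][f(x)+f(y)]$.

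For the lower bound in \eqref{e333}, set $A=(1-t)x+ty$ and $B=tx+(1-t)y$. The crucial step is the pair of algebraic identities $(1-2t)x=(1-t)A+t(-B)$ and $(1-2t)y=t(-A)+(1-t)B$, each verified by expanding and using $(1-t)^2-t^2=1-2t$. Applying $h$-convexity to the pairs $\{A,-B\}$ (with parameter $1-t$) and $\{-A,B\}$ (with parameter $t$), and then replacing $f(-A)$ by $f(A)$ and $f(-B)$ by $f(B)$, gives $f((1-2t)x)\le h(1-t)f(A)+h(t)f(B)$ and $f((1-2t)y)\le h(t)f(A)+h(1-t)f(B)$. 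Since $f$ is even, $f((2t-1)y)=f((1-2t)y)$; adding the last two estimates and dividing by $h(t)+h(1-t)$ (which the statement implicitly takes to be positive, the claim being vacuous otherwise) produces the left-hand inequality of \eqref{e333}.

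For \eqref{e334} I would first clear the denominator in the lower bound of \eqref{e333}, obtaining for all $t\in[0,1]$
\[
f((1-2t)x)+f((2t-1)y)\le[h(t)+h(1-t)]\big[f((1-t)x+ty)+f(tx+(1-t)y)\big],
\]
and then integrate over $[0,1]$ — legitimate since $f$ is continuous and $h$ integrable. On the right, the change of variable $t\mapsto 1-t$ shows the two integrals that appear are equal, so the right side becomes $2\int_0^1[h(t)+h(1-t)]f(tx+(1-t)y)\,dt$; on the left, the substitutions $u=1-2t$ and $u=2t-1$ together with evenness of $f$ (which converts $\int_{-1}^{1}f(ux)\,du$ into $2\int_0^1 f(ux)\,du$, and similarly with $y$) turn the left side into $\int_0^1[f(tx)+f(ty)]\,dt$. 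Dividing by $2$ gives \eqref{e334}.

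Finally, for \eqref{e335} I would start from \eqref{e334}, bound its integrand by $h$-convexity via $f(tx+(1-t)y)\le h(t)f(x)+h(1-t)f(y)$, expand $[h(t)+h(1-t)][h(t)f(x)+h(1-t)f(y)]$, and use the $t\mapsto 1-t$ symmetry to collapse the right side to $[f(x)+f(y)]\int_0^1 h(t)[h(t)+h(1-t)]\,dt$; super-additivity then gives $h(t)+h(1-t)\le h(1)$, hence $h(t)[h(t)+h(1-t)]\le h(1)h(t)$ pointwise and $\int_0^1 h(t)[h(t)+h(1-t)]\,dt\le h(1)\int_0^1 h(t)\,dt$, and dividing by $2h(1)\int_0^1 h(t)\,dt$ (positive since $h$ is strictly positive) yields \eqref{e335}. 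The only genuinely inventive point is spotting the two convex-combination identities in Part (i) and recognizing that evenness of $f$ is precisely what makes the negatives $-A,-B$ admissible arguments and lets one identify $f((2t-1)y)$ with $f((1-2t)y)$; once those are in hand, everything else is bookkeeping with the substitutions $t\mapsto 1-t$ and $u=1\mp 2t$.
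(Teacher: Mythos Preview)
Your proof is correct and follows essentially the same route as the paper's. The only cosmetic difference is in the left-hand bound of \eqref{e333}: the paper sets $a=(1-t)x+ty$ and $b=-tx-(1-t)y$ (i.e.\ your $A$ and $-B$) and applies the already-established right-hand inequality of \eqref{e333} once to the pair $(a,b)$, whereas you apply $h$-convexity twice to the pairs $\{A,-B\}$ and $\{-A,B\}$ separately; unwinding either gives the same two estimates. For \eqref{e335} the paper bounds $h(t)+h(1-t)\le h(1)$ and $f(tx+(1-t)y)\le h(t)f(x)+h(1-t)f(y)$ in one step, while you first apply $h$-convexity, symmetrize, and then use super-additivity---again the same ingredients in a slightly different order.
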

\begin{proof}
	(i) Since $f$ is a $h$-convex function, hence the right inequality of \eqref{e333} is clear. Set $a=(1-t)x+ty$ and $b=-tx-(1-t)y$, we have $(1-t)a+tb=(1-2t)x$ and $ta+(1-t)b=(2t-1)y$. Therefore
	\begin{align*}
	f&((1-2t)x)+f((2t-1)y)\\
	=&f\big((1-t)a+tb\big)+f\big(ta+(1-t)b\big)\\
	\le & [h(t)+h(1-t)][f(a)+f(b)]\hspace{3.5cm}{\footnotesize(\text{by the right inequality of \eqref{e333}})}\\
	=& [h(t)+h(1-t)]\big[f((1-t)x+ty)+f(-tx-(1-t)y)\big]\\
	=& [h(t)+h(1-t)]\big[f((1-t)x+ty)+f(tx+(1-t)y)\big]\,.\qquad{\footnotesize(f~\text{is a even function})}
	\end{align*}
	We therefore deduce the desired inequality in \eqref{e333}.\\
	(ii) By using \eqref{e333} inequality, we get
	{\footnotesize
		\begin{equation}\label{e3333}
		f((1-2t)x)+f((2t-1)y)\le [h(t)+h(1-t)]\left[f\big((1-t)x+ty\big)+f\big(tx+(1-t)y\big)\right]\,.
		\end{equation}
	}
	Integrating each side of \eqref{e3333}, we have
	{\footnotesize
		\begin{align}\label{e390}
		\int_0^1&f((1-2t)x)~\mathrm{d}t+\int_0^1f((2t-1)y)~\mathrm{d}t\nonumber\\
		&\le \int_0^1[h(t)+h(1-t)]f\big((1-t)x+ty\big)~\mathrm{d}t+\int_0^1[h(t)+h(1-t)]f\big(tx+(1-t)y\big)~\mathrm{d}t\nonumber\\
		&=2\int_0^1[h(t)+h(1-t)]f\big(tx+(1-t)y\big)~\mathrm{d}t\,.
		\end{align}
	}
	Since $f$ is even and by changing of variables $u=1-2t$, yield
	$$
	\int_0^1f((1-2t)x)~\mathrm{d}t=\int_0^1f(tx)~\mathrm{d}t\,,
	$$
	and similarly
	$$
	\int_0^1f((2t-1)y)~\mathrm{d}t=\int_0^1f(ty)~\mathrm{d}t\,.
	$$
	Consequently, by \eqref{e390}, the proof of \eqref{e334} completes.
	
	Now, assume that $h$ is supper-additive. Hence by \eqref{e334}, we have
	\begin{align*}
	\frac{1}{2}&\int_{0}^{1}[f(tx)+f(ty)]~\mathrm{d}t\\
	\le& \int_0^1[h(t)+h(1-t)]f\big(tx+(1-t)y\big)~\mathrm{d}t\\
	%\le & \int_0^1h[t+(1-t)]f\big(tx+(1-t)y\big)~\mathrm{d}t\quad\footnotesize{(h ~\text{is super-additive})}\\
	\le & h(1)\int_0^1\big[h(t)f(x)+h(1-t)f(y)\big]~\mathrm{d}t\quad\footnotesize{(h ~\text{is super-additive and } f ~\text{is $h$-convex})}\\
	=& h(1)\left[f(x)\left(\int_0^1h(t)~\mathrm{d}t\right)+f(y)\left(\int_0^1h(1-t)~\mathrm{d}t\right)\right]\\
	=& h(1)\left(\int_0^1h(t)~\mathrm{d}t\right)[f(x)+f(y)]\,,
	\end{align*}
	this show that \eqref{e335} holds.
\end{proof}
%----------------------------------------------------------
\begin{corollary}
	\begin{enumerate}
		\item[(i)] Assume that $X$ is a real vector space and $f:X\rightarrow\mathbb{R}$ is an even convex function. Then 
		\begin{align}
		f((1-2t)x)+f((2t-1)y)\le & f\big((1-t)x+ty\big)+f\big(tx+(1-t)y\big)\nonumber\\
		\le & f(x)+f(y)\,.
		\end{align}
		\item[(ii)] Let $X$ be a topological vector space and $f$ be a continuous even convex function, then
		\begin{equation}
		\frac{1}{2}\int_{0}^{1}[f(tx)+f(ty)]~\mathrm{d}t \le \int_0^1f\big(tx+(1-t)y\big)~\mathrm{d}t\le f(x)+f(y)\,.
		\end{equation}
	\end{enumerate}
\end{corollary}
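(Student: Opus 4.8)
The plan is to read off both parts from the special case $h(t)=t$ of Theorem~\ref{t3.4}. First I would check that this $h$ satisfies the hypotheses used there: it is non-negative, integrable, and strictly positive off the origin; it is multiplicative since $h(st)=st=h(s)h(t)$; and it is additive—hence in particular super-additive—since $h(s+t)=s+t=h(s)+h(t)$. Moreover, every non-negative convex function satisfies \eqref{eq1111} with this $h$, as recalled in the Introduction, so it is $h$-convex on $X$. After that, the only arithmetic I will need is the identities $h(t)+h(1-t)=1$, $h(1)=1$, and $\int_0^1 h(t)\,\mathrm{d}t=\tfrac12$.

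For part (i), substituting $h(t)=t$ into \eqref{e333} makes the denominator $h(t)+h(1-t)$ on the far left equal to $1$ and the factor $h(t)+h(1-t)$ on the far right equal to $1$ as well, so \eqref{e333} becomes exactly the asserted chain of inequalities; one only uses that $f$ is convex and even, via the reflection $a=(1-t)x+ty$, $b=-(tx+(1-t)y)$ from the proof of Theorem~\ref{t3.4}(i) together with $f(-b)=f(b)$. For the left-hand inequality of part (ii), the same substitution turns the right-hand side of \eqref{e334} into $\int_0^1 f(tx+(1-t)y)\,\mathrm{d}t$, which is precisely the bound claimed.

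For the right-hand inequality of part (ii) I would not invoke \eqref{e335}: with $h(t)=t$ its normalizing constant $\frac{1}{2h(1)\int_0^1 h(t)\,\mathrm{d}t}$ equals $1$, but \eqref{e335} then bounds $\int_0^1[f(tx)+f(ty)]\,\mathrm{d}t$, not the integral we want. Instead I would integrate convexity directly: $\int_0^1 f(tx+(1-t)y)\,\mathrm{d}t\le\int_0^1[tf(x)+(1-t)f(y)]\,\mathrm{d}t=\tfrac12(f(x)+f(y))\le f(x)+f(y)$. The only delicate point—and the one I would be careful about—is that this last step uses $f\ge 0$, whereas the Corollary is stated for even convex $f$; for part (i) and the left-hand inequality of part (ii) non-negativity is never used and the statements hold verbatim, while the final bound in part (ii) should be read with $f$ non-negative (equivalently $f(0)\ge 0$), consistent with the $h$-convex framework of Theorem~\ref{t3.4}. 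Beyond keeping track of these substitutions I do not anticipate any genuine obstacle.
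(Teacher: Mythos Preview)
Your approach is correct and essentially the same as the paper's, which simply says to put $h(t)=t$ in Theorem~\ref{t3.4}. Your ``direct'' argument for the right-hand inequality in (ii)---integrating $f(tx+(1-t)y)\le tf(x)+(1-t)f(y)$---is exactly the intermediate step in the proof of \eqref{e335} (the line using ``$h$ is super-additive and $f$ is $h$-convex''), so while you are right that the final statement \eqref{e335} alone does not bound the correct integral, the paper implicitly relies on that displayed chain rather than only on the labeled inequality.
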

\begin{proof}
	Enough put in Theorem \ref{t3.4}, $h(t)=t$.
\end{proof}
%----------------------------------------------------------
\begin{corollary}\cite[Lemma 3.2]{RHM}
	\begin{enumerate}
		\item[(i)] Assume that $X$ is a real vector space and $f:X\rightarrow\mathbb{R}$ is an even function in $P(I)$. Then 
		\begin{align}
		\frac{f((1-2t)x)+f((2t-1)y)}{2}\le & f\big((1-t)x+ty\big)+f\big(tx+(1-t)y\big)\nonumber\\
		\le & 2\big(f(x)+f(y)\big)\,.
		\end{align}
		\item[(ii)] Let $X$ be a topological vector space and $f$ be a continuous even function in $P(I)$, then
		\begin{equation}
		\frac{1}{4}\int_{0}^{1}[f(tx)+f(ty)]~\mathrm{d}t \le \int_0^1f\big(tx+(1-t)y\big)~\mathrm{d}t\le f(x)+f(y)\,.
		\end{equation}
	\end{enumerate}
\end{corollary}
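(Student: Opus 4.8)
The plan is to recognise a $P$-function as the special case $h\equiv 1$ of an $h$-convex function and then to read off both parts of the corollary from Theorem \ref{t3.4}, with one small adjustment for the last inequality. First I would observe that if $f\in P(I)$ then $f$ satisfies $f(tx+(1-t)y)\le h(t)f(x)+h(1-t)f(y)$ with the constant function $h(t)=1$ on $J$; this $h$ is non-negative, not identically zero, integrable and strictly positive, so every hypothesis of Theorem \ref{t3.4} involving $h$ (apart from super-additivity, see below) is met. For this $h$ one has $h(t)+h(1-t)=2$, $h(1)=1$ and $\int_0^1 h(t)\,\mathrm{d}t=1$.

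For part (i), I would simply substitute $h(t)+h(1-t)=2$ into \eqref{e333}: the left-hand side becomes $\tfrac12\big(f((1-2t)x)+f((2t-1)y)\big)$ and the right-hand side becomes $2\big(f(x)+f(y)\big)$, which is exactly the asserted chain of inequalities. No further work is needed since evenness of $f$ is already assumed in Theorem \ref{t3.4}(i).

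For part (ii), the left inequality is obtained the same way: putting $h(t)+h(1-t)=2$ into \eqref{e334} gives $\tfrac12\int_0^1[f(tx)+f(ty)]\,\mathrm{d}t\le 2\int_0^1 f\big(tx+(1-t)y\big)\,\mathrm{d}t$, i.e. $\tfrac14\int_0^1[f(tx)+f(ty)]\,\mathrm{d}t\le \int_0^1 f\big(tx+(1-t)y\big)\,\mathrm{d}t$, the integrals existing by continuity of $f$. The right inequality, however, \emph{cannot} be taken from \eqref{e335}, because the constant function $h\equiv 1$ is not super-additive ($h(x+y)=1<2=h(x)+h(y)$). Instead I would argue directly: since $f\in P(I)$, one has $f\big(tx+(1-t)y\big)\le f(x)+f(y)$ for every $t\in[0,1]$, and integrating this inequality over $t\in[0,1]$ yields $\int_0^1 f\big(tx+(1-t)y\big)\,\mathrm{d}t\le f(x)+f(y)$.

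There is essentially no hard step here; the only point requiring care is precisely the observation that the super-additivity branch of Theorem \ref{t3.4}(ii) is unavailable for $h\equiv 1$, so the final inequality in part (ii) must be obtained by the above one-line direct integration of the defining inequality of $P$-functions rather than by an appeal to \eqref{e335}.
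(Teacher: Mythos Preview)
Your proposal is correct and follows the paper's approach: the paper's own proof is the single line ``In Theorem \ref{t3.4}, put $h(t)=1$.'' Your observation that the right-hand inequality in part (ii) does \emph{not} follow from \eqref{e335} (because $h\equiv 1$ fails super-additivity) is well taken, and your one-line fix---integrate the defining $P$-inequality $f(tx+(1-t)y)\le f(x)+f(y)$ over $t\in[0,1]$---is exactly what is needed; the paper's terse proof glosses over this point.
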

\begin{proof}
	In Theorem \ref{t3.4}, put $h(t)=1$.
\end{proof}
%----------------------------------------------------------
\begin{example}\cite[Theorem 3.3]{RHM}
	Let $(X, \|\cdot\|)$ be a normed space, $x, y \in X$ and $0 < p <1$. Since $f(x)=\|x\|^p$ is an even continuous $P$-convex function, we have the following Hermit-Hadamard inequality 
	$$
	\frac{\|x\|^p+\|y\|^p}{4(p+1)}\le \int_0^1\|(1-t)x+ty\|^p\mathrm{d}t \le \|x\|^p+\|y\|^p\,.
	$$
\end{example}
%%%%%%%%%%%%%%%%%%%%%%%%%%%%%%%%%%%%%%%%%%%
\section{Jensen-Mercer type inequality}
In \cite{AMDM}, Mercer proved that
\begin{equation}\label{e1}
f\left(x_1+x_n-\sum_{j=1}^nt_jx_j\right) \le f(x_1)+f(x_n)-\sum_{j=1}^n t_jf(x_j)\,.
\end{equation}
where $x_j$'s also satisfy in the condition $0<x_1 \le x_2 \le \cdots \le x_n$, $t_j \ge 0$ and $\sum_{j=1}^nt_j=1$.

In this section, we present the Jensen-Mercer inequality for $h$-convex functions.
\begin{theorem}\cite[Theorem 19]{VAR}
	Let $t_1, \cdots, t_n$ be positive real numbers $(n\ge 2)$. If $h$ is a non-negative super-multiplicative function, $f$ is a $h$-convex function on $I$ and $x_1, \cdots, x_n\in I$, then
	\begin{equation}
	f\left(\frac{1}{T_n}\sum_{j=1}^nt_jx_j\right)\le\sum_{j=1}^nh\left(\frac{t_j}{T_n}\right)f(x_j)\,,
	\end{equation}
	where $T_n=\sum_{j=1}^nt_j$.
\end{theorem}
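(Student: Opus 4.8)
The plan is to argue by induction on $n$, following the classical proof of the weighted Jensen inequality but tracking the multiplicative distortion produced by $h$ and absorbing it through super-multiplicativity. Two structural facts will be used repeatedly: a $h$-convex function is non-negative, and $h\ge 0$; both are needed to keep the inequalities pointing in the right direction when we multiply through. Also, since each $t_j>0$, every weight that will appear as an argument of $h$ or of $f$ in a two-point splitting lies strictly in $(0,1)$, so the defining inequality \eqref{eq1111} is legitimately applicable at each stage.

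\emph{Base case $n=2$.} Here $\frac{t_1}{T_2}+\frac{t_2}{T_2}=1$ with $\frac{t_1}{T_2},\frac{t_2}{T_2}\in(0,1)$, so \eqref{eq1111} with $t=\frac{t_1}{T_2}$ gives
$$
f\!\left(\frac{t_1x_1+t_2x_2}{T_2}\right)\le h\!\left(\frac{t_1}{T_2}\right)f(x_1)+h\!\left(\frac{t_2}{T_2}\right)f(x_2),
$$
which is exactly the claim for $n=2$ (and here super-multiplicativity is not even needed).

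\emph{Inductive step.} Suppose the inequality holds for $n-1$ points. Set $T_{n-1}=\sum_{j=1}^{n-1}t_j$ and $y=\frac{1}{T_{n-1}}\sum_{j=1}^{n-1}t_jx_j\in I$. Since $\frac{1}{T_n}\sum_{j=1}^{n}t_jx_j=\frac{T_{n-1}}{T_n}\,y+\frac{t_n}{T_n}\,x_n$ with $\frac{T_{n-1}}{T_n}+\frac{t_n}{T_n}=1$ and both weights in $(0,1)$, $h$-convexity of $f$ yields
$$
f\!\left(\frac{1}{T_n}\sum_{j=1}^{n}t_jx_j\right)\le h\!\left(\frac{T_{n-1}}{T_n}\right)f(y)+h\!\left(\frac{t_n}{T_n}\right)f(x_n).
$$
Apply the induction hypothesis to bound $f(y)\le\sum_{j=1}^{n-1}h\!\left(\frac{t_j}{T_{n-1}}\right)f(x_j)$, multiply by the non-negative scalar $h\!\left(\frac{T_{n-1}}{T_n}\right)$, and then use super-multiplicativity together with $f(x_j)\ge0$ in the form
$$
h\!\left(\frac{T_{n-1}}{T_n}\right)h\!\left(\frac{t_j}{T_{n-1}}\right)\le h\!\left(\frac{T_{n-1}}{T_n}\cdot\frac{t_j}{T_{n-1}}\right)=h\!\left(\frac{t_j}{T_n}\right),\qquad j=1,\dots,n-1.
$$
Combining the three displays gives $f\!\left(\frac{1}{T_n}\sum_{j=1}^{n}t_jx_j\right)\le\sum_{j=1}^{n}h\!\left(\frac{t_j}{T_n}\right)f(x_j)$, completing the induction.

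The algebraic rearrangements of the convex combinations are routine; the one genuine point of care—and the step I would state most explicitly—is the super-multiplicativity estimate, which must be applied to arguments $\frac{T_{n-1}}{T_n},\frac{t_j}{T_{n-1}}\in(0,1)\subseteq J$ and paired with the non-negativity of $h$ and of $f$ so that neither the multiplication by $h\!\left(\frac{T_{n-1}}{T_n}\right)$ nor the replacement of $h\!\left(\frac{T_{n-1}}{T_n}\right)h\!\left(\frac{t_j}{T_{n-1}}\right)$ by $h\!\left(\frac{t_j}{T_n}\right)$ reverses the inequality. The hypothesis $t_j>0$ is precisely what guarantees all these arguments lie in $(0,1)$, so \eqref{eq1111} is available throughout.
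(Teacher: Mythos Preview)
Your proof is correct. Note, however, that the paper does not actually supply a proof of this theorem: it is quoted from \cite[Theorem 19]{VAR} and stated without proof, with Lemma~\ref{l4.2} following immediately. That said, the paper does prove the operator analogue (Theorem~\ref{t5.3}) by exactly the induction you carry out---split off the $n$-th term, apply two-point $h$-convexity with weights $t_n$ and $T_{n-1}/T_n$, invoke the inductive hypothesis on the remaining $n-1$ points, and then use super-multiplicativity in the form $h(T_{n-1}/T_n)\,h(t_j/T_{n-1})\le h(t_j/T_n)$---so your approach matches the method the authors themselves use in the operator setting. Your additional remarks about non-negativity of $f$ and $h$ preserving the inequality direction, and about the weights lying in $(0,1)\subseteq J$, are points the paper's proof of Theorem~\ref{t5.3} leaves implicit.
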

\begin{lemma}\label{l4.2}
	Let $0<x \le y$ and $f$ be a $h$-convex function, then for every $z\in[x, y]$, there exists $\lambda\in[0, 1]$ such that
	\begin{equation}\label{e44}
	f(x+y-z) \le [h(\lambda)+h(1-\lambda)][f(x)+f(y)]-f(z)\,.
	\end{equation}
	Moreover, if $h$ is super-additive, then 
	$$
	f(x+y-z) \le h(1)[f(x)+f(y)]-f(z)\,.
	$$
\end{lemma}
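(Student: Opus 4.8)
The plan is to use the observation that, for $z\in[x,y]$, the point $x+y-z$ is just the mirror image of $z$ about the midpoint $\tfrac{x+y}{2}$, so both $z$ and $x+y-z$ can be written as convex combinations of $x$ and $y$ with the \emph{same} pair of weights in swapped order. Applying the definition of $h$-convexity to each of these two combinations and adding the resulting inequalities should produce \eqref{e44} after regrouping, and the super-additive case will then follow from a one-line estimate on $h(\lambda)+h(1-\lambda)$.

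Concretely, first I would produce the claimed $\lambda$. Since $x\le z\le y$, set $\lambda=\frac{y-z}{y-x}$ when $x<y$ (and take $\lambda\in[0,1]$ arbitrary in the trivial case $x=y$, where $z=x=y$); then $\lambda\in[0,1]$ and a direct check gives $z=\lambda x+(1-\lambda)y$, hence $x+y-z=(1-\lambda)x+\lambda y$. Next, using the $h$-convexity of $f$ (in the form valid for $t\in[0,1]$, as in Section 3) applied to these two representations, I would record
$$
f(z)\le h(\lambda)f(x)+h(1-\lambda)f(y),\qquad f(x+y-z)\le h(1-\lambda)f(x)+h(\lambda)f(y).
$$
Adding these and collecting the coefficients of $f(x)$ and of $f(y)$ yields $f(z)+f(x+y-z)\le[h(\lambda)+h(1-\lambda)]\,[f(x)+f(y)]$, which is exactly \eqref{e44} after moving $f(z)$ to the other side.

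For the ``moreover'' part, I would invoke super-additivity of $h$, namely $h(\lambda)+h(1-\lambda)\le h\big(\lambda+(1-\lambda)\big)=h(1)$, and substitute this into \eqref{e44} to obtain $f(x+y-z)\le h(1)[f(x)+f(y)]-f(z)$. I do not expect a genuine obstacle here; the only point that warrants a word of care is the choice of $\lambda$ when $x=y$ (so that the formula $\lambda=\frac{y-z}{y-x}$ is not used with a zero denominator), in which case the inequality is immediate since $x+y-z=z$ and $h(\lambda)+h(1-\lambda)\ge 1$ follows from the definition of $h$-convexity applied with equal arguments.
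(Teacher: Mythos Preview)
Your proof is correct and follows essentially the same approach as the paper's: both write $z=\lambda x+(1-\lambda)y$ and $x+y-z=(1-\lambda)x+\lambda y$, apply $h$-convexity to each representation, and combine. The only cosmetic difference is that the paper rewrites $h(1-\lambda)f(x)+h(\lambda)f(y)$ as $[h(\lambda)+h(1-\lambda)][f(x)+f(y)]-[h(\lambda)f(x)+h(1-\lambda)f(y)]$ and then bounds the subtracted bracket below by $f(z)$, whereas you add the two $h$-convexity inequalities directly---algebraically the same step.
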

\begin{proof}
	Since $z\in[x, y]$, there exists $\lambda\in[0,1]$ such that
	$$
	z= \lambda x+(1-\lambda)y\,.
	$$
	By using $h$-convexity of $f$, we have
	\begin{align*}
	f(x+y-z)&= f\big((1-\lambda)x+\lambda y\big)\\
	&\le h(1-\lambda)f(x)+h(\lambda)f(y)\\
	&= [h(\lambda)+h(1-\lambda)][f(x)+f(y)]-[h(\lambda)f(x)+h(1-\lambda)f(y)]\\
	&\le  [h(\lambda)+h(1-\lambda)][f(x)+f(y)]-f\big(\lambda x+(1-\lambda)y\big)\\
	&=  [h(\lambda)+h(1-\lambda)][f(x)+f(y)]-f(z)\,.
	\end{align*}
	If $h$ is super-additive, then $h(\lambda)+h(1-\lambda)\le h(1)$. So the end part of theorem holds.
\end{proof}
\begin{theorem}
	Let $f$ be a $h$-convex function on an interval containing the $x_j~(j=1, \cdots, n)$ such that $0<x_1\le \cdots \le x_n$, then
	{\footnotesize
		$$
		f\left(x_1+x_n-\sum_{j=1}^{n}t_jx_j\right)\le \left(\sum_{j=1}^nh(t_j)[h(\lambda_j)+h(1-\lambda_j)]\right)\big(f(x_1)+f(x_n)\big)-\sum_{j=1}^nh(t_j)f(x_j)\,,
		$$
	}
	where for every $j=1,\cdots,n$, there exists $\lambda_j\in[0,1]$ such that  $x_j=\lambda_jx_1+(1-\lambda_j)x_n$.
\end{theorem}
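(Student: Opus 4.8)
The plan is to combine the $h$-convex Jensen inequality quoted just above (\cite[Theorem 19]{VAR}, which forces us to assume in addition that $h$ is super-multiplicative, together with the standing Mercer hypotheses $t_j\ge 0$ and $\sum_{j=1}^n t_j=1$) with Lemma \ref{l4.2} applied $n$ times.

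First I would rewrite the argument of $f$ on the left-hand side. Since $\sum_{j=1}^n t_j=1$,
\[
x_1+x_n-\sum_{j=1}^n t_j x_j=\sum_{j=1}^n t_j\bigl(x_1+x_n-x_j\bigr),
\]
and, because $x_1\le x_j\le x_n$, each point $x_1+x_n-x_j$ again lies in $[x_1,x_n]$, hence in the interval on which $f$ is $h$-convex. Applying the quoted Jensen-type inequality with $T_n=1$ to this convex combination gives
\[
f\!\left(x_1+x_n-\sum_{j=1}^n t_j x_j\right)\le \sum_{j=1}^n h(t_j)\,f\bigl(x_1+x_n-x_j\bigr).
\]

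Next, for each fixed $j$ I would invoke Lemma \ref{l4.2} with $x=x_1$, $y=x_n$ and $z=x_j$: since $x_j\in[x_1,x_n]$ there is $\lambda_j\in[0,1]$ with $x_j=\lambda_j x_1+(1-\lambda_j)x_n$, and
\[
f\bigl(x_1+x_n-x_j\bigr)\le \bigl[h(\lambda_j)+h(1-\lambda_j)\bigr]\bigl[f(x_1)+f(x_n)\bigr]-f(x_j).
\]
Substituting this bound into the previous display and splitting the resulting sum into its $[h(\lambda_j)+h(1-\lambda_j)]$-weighted part and its $-\sum_j h(t_j)f(x_j)$ part yields precisely the asserted inequality.

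The bookkeeping is routine; the only genuine point to watch is that the quoted Jensen step needs $h$ super-multiplicative, so this hypothesis (absent from the current statement) must be assumed — without it the second displayed inequality above can fail. A secondary, trivial point is simply to record that each $\lambda_j$ exists, which is immediate from $0<x_1\le x_j\le x_n$.
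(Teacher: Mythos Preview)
Your argument is correct and matches the paper's proof essentially step for step: rewrite the argument as $\sum_j t_j(x_1+x_n-x_j)$, apply the $h$-Jensen inequality (Theorem~19 of \cite{VAR}) to pull out $\sum_j h(t_j)f(x_1+x_n-x_j)$, then invoke Lemma~\ref{l4.2} termwise. Your observation that super-multiplicativity of $h$ is tacitly needed for the Jensen step is apt; the paper's statement omits it as well but uses it in exactly the same place.
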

\begin{proof}
	With the above assumption, we have
	\begin{align*}
	f & \left(x_1+x_n-\sum_{j=1}^{n}t_jx_j\right)\\
	&=f\left(\sum_{j=1}^nt_j(x_1+x_n-x_j)\right)\\
	&\le \sum_{j=1}^nh(t_j)f(x_1+x_n-x_j)\\
	&\le \sum_{j=1}^nh(t_j)\Big([h(\lambda_j)+h(1-\lambda_j)]\big(f(x_1)+f(x_n)\big)-f(x_j)\Big)~{\footnotesize(\text{by Lemma \ref{l4.2}})}\\
	&=\sum_{j=1}^nh(t_j)[h(\lambda_j)+h(1-\lambda_j)]\big(f(x_1)+f(x_n)\big)-\sum_{j=1}^nh(t_j)f(x_j)\,,
	\end{align*}
	this completes the proof.
\end{proof}
\begin{corollary}
	With the assumptions of previous theorem, if $h$ is a super-additive function such that for every probability vector $(t_1,\cdots,t_n)$, $\sum_{j=1}^nh(t_j)\le 1$, then
	$$
	f\left(x_1+x_n-\sum_{j=1}^{n}t_jx_j\right)\le h(1)\big(f(x_1)+f(x_n)\big)-\sum_{j=1}^nh(t_j)f(x_j)\,.
	$$
	Moreover, if $h$ is multiplicative, then
	$$
	f\left(x_1+x_n-\sum_{j=1}^{n}t_jx_j\right)\le f(x_1)+f(x_n)-\sum_{j=1}^nh(t_j)f(x_j)\,.
	$$
\end{corollary}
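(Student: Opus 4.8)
The plan is to read this corollary off directly from the conclusion of the preceding theorem, which already provides
\[
f\!\left(x_1+x_n-\sum_{j=1}^{n}t_jx_j\right)\le \left(\sum_{j=1}^nh(t_j)\big[h(\lambda_j)+h(1-\lambda_j)\big]\right)\big(f(x_1)+f(x_n)\big)-\sum_{j=1}^nh(t_j)f(x_j),
\]
where $x_j=\lambda_jx_1+(1-\lambda_j)x_n$ with $\lambda_j\in[0,1]$. So the whole task reduces to estimating the coefficient $\sum_{j=1}^nh(t_j)[h(\lambda_j)+h(1-\lambda_j)]$ that multiplies the non-negative quantity $f(x_1)+f(x_n)$.

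First I would note that for each $j$, since $\lambda_j+(1-\lambda_j)=1$ and $h$ is super-additive, $h(\lambda_j)+h(1-\lambda_j)\le h(\lambda_j+(1-\lambda_j))=h(1)$ — precisely the step already used at the end of Lemma \ref{l4.2}. Since $h(t_j)\ge 0$, multiplying by $h(t_j)$ and summing gives $\sum_{j=1}^nh(t_j)[h(\lambda_j)+h(1-\lambda_j)]\le h(1)\sum_{j=1}^nh(t_j)\le h(1)$, where the last inequality uses the standing hypothesis $\sum_{j=1}^nh(t_j)\le 1$ for the probability vector $(t_1,\dots,t_n)$ together with $h(1)\ge 0$. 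Because $f$ is non-negative, $f(x_1)+f(x_n)\ge 0$, so this bound on the coefficient propagates in the right direction; substituting it into the displayed inequality and leaving the term $-\sum_jh(t_j)f(x_j)$ untouched yields the first assertion.

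For the ``moreover'' part I would additionally invoke multiplicativity: from $h(1)=h(1\cdot 1)=h(1)^2$ and $h\not\equiv 0$ (indeed $h$ is strictly positive in this setting) one gets $h(1)=1$, exactly as recorded in the Third Characterization. Plugging $h(1)=1$ into the first inequality collapses the coefficient of $f(x_1)+f(x_n)$ to $1$ and delivers
\[
f\!\left(x_1+x_n-\sum_{j=1}^{n}t_jx_j\right)\le f(x_1)+f(x_n)-\sum_{j=1}^nh(t_j)f(x_j).
\]
Taking $h(t)=t$, which is simultaneously additive and multiplicative, recovers Mercer's inequality \eqref{e1}, confirming that the corollary is a genuine generalization.

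There is no real obstacle here: the argument is essentially a one-line consequence of the previous theorem combined with super-additivity. The only things needing (routine) care are the sign bookkeeping — verifying that $f(x_1)+f(x_n)$, each $h(t_j)$, and $h(1)$ are non-negative so the chain of inequalities does not flip — and, in the multiplicative case, the elementary deduction $h(1)=1$. As an aside, for super-additive $h$ one always has $\sum_{j=1}^nh(t_j)\le h(1)$ by induction, so the hypothesis $\sum_jh(t_j)\le 1$ is in fact equivalent to $h(1)\le 1$, and it is automatic once $h$ is multiplicative.
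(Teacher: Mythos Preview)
Your proposal is correct and follows exactly the route the paper intends: the corollary is stated without proof, as an immediate consequence of the preceding theorem together with the super-additive bound $h(\lambda_j)+h(1-\lambda_j)\le h(1)$ from Lemma~\ref{l4.2} and the observation $h(1)=1$ in the multiplicative case. Your closing remark that super-additivity already forces $\sum_jh(t_j)\le h(1)$, so that the extra hypothesis amounts to $h(1)\le 1$ and is automatic under multiplicativity, is a nice clarification not made explicit in the paper.
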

%%%%%%%%%%%%%%%%%%%%%%%%%%%%%%%%%%%%%%%%%%%
\section{Operator $h$-convex functions}
In this section, we present the definition of operator $h$-convex function for operators acting on a Hilbert space. 
\begin{definition}
	Let $h:J\subseteq\mathbb{R}\to\mathbb{R}$ be a non-negative function, $h\not\equiv 0$. We say that $f:I\to\mathbb{R}$ is an \textit{operator $h$-convex function}, if $f$ is non-negative and for all $A, B\in\mathbb{B}(\mathcal{H})$ with $\sigma(A), \sigma(B)\subseteq I$ and $t\in(0, 1)$, 
	\begin{equation}\label{eq111}
	f\left(t A+(1-t)B\right) \le h(t)f(A)+h(1-t)f(B)\,,
	\end{equation}
	where $\sigma(A)$ and $\sigma(B)$ are spectrum of $A$ and $B$, respectively.
	
	If inequality \eqref{eq111} is reversed, then $f$ is said to be \textit{operator $h$-concave}.
\end{definition}
If $t=\frac{1}{2}$ in \eqref{eq111}, then $f$ is called $h$-\textit{mid-convex function}.
\begin{example}
	Assume that $h$ is a function on $[0, \infty)$ such that $h(t) \ge t$ and $f(t)=t^2$ on an interval $I\subseteq\mathbb{R}$. Then $f$ is operator $h$-mid-convex function. Because, 
	\begin{align*}
	&h\left(\frac{1}{2}\right)\left(A^2+B^2\right)-\left(\frac{A+B}{2}\right)^2\\
	&=h\left(\frac{1}{2}\right)\left(A^2+B^2\right)-\frac{A^2+AB+BA+B^2}{4}\\
	&=\frac{\left(4h(1/2)-1\right)A^2-AB-BA+\left(4h(1/2)-1\right)B^2}{4}\\
	&\ge\frac{1}{4}\left(A^2-AB-BA-B^2\right)\\
	&=\frac{1}{4}(A-B)^2 \ge 0\,.
	\end{align*}
\end{example}
%-----------------------------------------------------
Now, we can prove the following theorem as Theorem 1.9 in \cite{FMPS}. So, we omit the proof it.
\begin{theorem}[Jensen's type operator inequality]\label{jtoi}
	Let $\mathcal{H}$ and $\mathcal{K}$ be Hilbert space. Assume that $h$ is non-negative super-mutiplicative function and $f$ is a real valued function on an interval $I\subseteq\mathbb{R}$. Suppose that $A, A_j\in\mathbb{B}_h(\mathcal{H})$ such that $\sigma(A), \sigma(A_j)\subseteq I~ (j=1,\cdots,n)$. Then the following conditions are equivalent:
	\begin{enumerate}
		\item[(i)] $f$ is operator $h$-mid-convex on $I$;
		\item[(ii)] $f(C^*AC)\le 2h(\frac{1}{2})C^*f(A)C$ for every self-adjoint operator $A:\mathcal{H}\to \mathcal{H}$ and isometry $C:\mathcal{K}\to\mathcal{H}$, i.e.; $C^*C=1_{\mathcal{K}}$;
		\item[(iii)] $f(C^*AC)\le 2h(\frac{1}{2})C^*f(A)C$ for every self-adjoint operator $A:\mathcal{H}\to \mathcal{H}$ and isometry $C:\mathcal{H}\to \mathcal{H}$;
		\item[(iv)] $f\left(\sum_{j=1}^nC_j^*A_jC_j\right)\le2h(\frac{1}{2})\sum_{j=1}^nC_j^*f(A_j)C_j$ for every self-adjoint operator $A_j:\mathcal{H}\to \mathcal{H}$ and bounded linear operators $C_j:\mathcal{K}\to \mathcal{H}$; with $\sum_{j=1}^nC_j^*C_j=1_{\mathcal{K}}\quad(j=1,\cdots,n)$;
		\item[(v)] $f\left(\sum_{j=1}^nC_j^*A_jC_j\right)\le2h(\frac{1}{2})\sum_{j=1}^nC_j^*f(A_j)C_j$ for every self-adjoint operator $A_j:\mathcal{H}\to \mathcal{H}$ and bounded linear operators $C_j:\mathcal{H}\to \mathcal{H}$; with $\sum_{j=1}^nC_j^*C_j=1_{\mathcal{H}}\quad(j=1,\cdots,n)$;
		\item[(vi)] $f\left(\sum_{j=1}^nP_jA_jP_j\right)\le2h(\frac{1}{2})\sum_{j=1}^nP_jf(A_j)P_j$ for every self-adjoint operator $A_j:\mathcal{H}\to \mathcal{H}$ and projection $P_j:\mathcal{H}\to \mathcal{H}$; with $\sum_{j=1}^nP_j=1_{\mathcal{H}}\quad(j=1,\cdots,n)$.
	\end{enumerate}
\end{theorem}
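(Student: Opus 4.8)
The plan is to follow the proof of the operator Jensen inequality \cite[Theorem 1.9]{FMPS}, replacing operator convexity by operator $h$-mid-convexity throughout and tracking the multiplicative constant $2h(\tfrac12)$. Concretely I would establish the cycle
$$
(i)\Longrightarrow(ii)\Longrightarrow(iv)\Longrightarrow(v)\Longrightarrow(vi)\Longrightarrow(i),
$$
together with $(ii)\Rightarrow(iii)$ and $(iii)\Rightarrow(i)$, where, as is customary, these operator conditions are read as holding over all Hilbert spaces, so that direct sums and compressions are available.

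The one step carrying real content is $(i)\Rightarrow(ii)$. Given an isometry $C\colon\mathcal K\to\mathcal H$, set $p=CC^{*}$, a projection on $\mathcal H$, and $p^{\perp}=1_{\mathcal H}-p$; then $C$ is a unitary of $\mathcal K$ onto $p\mathcal H$. The key is the elementary identity
$$
p\,A\,p+p^{\perp}A\,p^{\perp}=\tfrac12\bigl(A+uAu\bigr),\qquad u:=2p-1_{\mathcal H},
$$
valid for every self-adjoint $A$, where $u$ is a self-adjoint unitary. Since $u$ is unitary, $uAu$ is self-adjoint with $\sigma(uAu)=\sigma(A)\subseteq I$; and $\tfrac12(A+uAu)$ is block-diagonal for $p$, its diagonal blocks being compressions of $A$ and hence (because $m\le A\le M$ forces $m\le pAp\le M$ on $p\mathcal H$) having spectra in $I$. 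Operator $h$-mid-convexity therefore gives
$$
f\Bigl(\tfrac{A+uAu}{2}\Bigr)\le h\bigl(\tfrac12\bigr)\bigl(f(A)+f(uAu)\bigr)=h\bigl(\tfrac12\bigr)\bigl(f(A)+uf(A)u\bigr),
$$
the last equality holding because $u$ is unitary. On the left, functional calculus respects the block decomposition, so the left-hand side is $f(pAp)\oplus f(p^{\perp}Ap^{\perp})$ on $p\mathcal H\oplus p^{\perp}\mathcal H$; on the right, applying the same identity to $f(A)$ rewrites $f(A)+uf(A)u$ as $2\bigl(pf(A)p+p^{\perp}f(A)p^{\perp}\bigr)$, and this second use of the identity is precisely what produces the factor $2h(\tfrac12)$. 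Comparing the $p\mathcal H$-blocks gives $f(pAp)\le 2h(\tfrac12)\,pf(A)p$ on $p\mathcal H$, and conjugating by $C$ — using $pC=C$, $C^{*}p=C^{*}$, and $f(C^{*}AC)=C^{*}f(pAp)C$ — yields $f(C^{*}AC)\le 2h(\tfrac12)\,C^{*}f(A)C$.

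The remaining links are routine. For $(ii)\Rightarrow(iv)$ one uses the standard ampliation: given $A_{j}$ and $C_{j}\colon\mathcal K\to\mathcal H$ with $\sum_{j}C_{j}^{*}C_{j}=1_{\mathcal K}$, put $A=\bigoplus_{j}A_{j}$ on $\mathcal H^{\oplus n}$ and let $C\colon\mathcal K\to\mathcal H^{\oplus n}$ have components $C_{j}$ (that is, $Cx=(C_{1}x,\dots,C_{n}x)$); then $C^{*}C=1_{\mathcal K}$, $C^{*}AC=\sum_{j}C_{j}^{*}A_{j}C_{j}$, $C^{*}f(A)C=\sum_{j}C_{j}^{*}f(A_{j})C_{j}$, and $(ii)$ applies. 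Then $(iv)\Rightarrow(v)$, $(v)\Rightarrow(vi)$ and $(ii)\Rightarrow(iii)$ are plain specializations (take $\mathcal K=\mathcal H$; take the $C_{j}$ to be the $P_{j}$, noting $P_{j}^{*}P_{j}=P_{j}$ and $\sum_{j}P_{j}=1_{\mathcal H}$). To close the cycle, $(vi)\Rightarrow(i)$: given self-adjoint $X,Y$ with spectra in $I$, work on $\mathcal H\oplus\mathcal H$ with $A_{1}=A_{2}=X\oplus Y$ and the complementary projections
$$
P_{1}=\tfrac12\begin{pmatrix}1&1\\1&1\end{pmatrix},\qquad P_{2}=\tfrac12\begin{pmatrix}1&-1\\-1&1\end{pmatrix};
$$
a direct computation gives $P_{1}A_{1}P_{1}+P_{2}A_{2}P_{2}=\tfrac{X+Y}{2}\oplus\tfrac{X+Y}{2}$ and, likewise, $\sum_{j}P_{j}f(A_{j})P_{j}=\tfrac{f(X)+f(Y)}{2}\oplus\tfrac{f(X)+f(Y)}{2}$, so reading off one diagonal block of the inequality in $(vi)$ produces $f\bigl(\tfrac{X+Y}{2}\bigr)\le 2h(\tfrac12)\cdot\tfrac{f(X)+f(Y)}{2}=h(\tfrac12)\bigl(f(X)+f(Y)\bigr)$, i.e. $(i)$. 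The implication $(iii)\Rightarrow(i)$ is the analogous trick carried out on the infinite direct sum $\mathcal H\oplus\mathcal H\oplus\cdots$, with a shift-type isometry $C$ chosen so that the first diagonal block of $C^{*}AC$ is $\tfrac{X+Y}{2}$.

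I expect the whole difficulty to sit in $(i)\Rightarrow(ii)$: one must check that the operators actually handed to the $h$-mid-convexity hypothesis — namely $uAu$ and the compressions $pAp$ and $p^{\perp}Ap^{\perp}$ — still have spectra inside $I$, and, more subtly, one must invoke the block identity \emph{twice}, once for $A$ and once for $f(A)$, so that the constant emerges as $2h(\tfrac12)$ rather than $h(\tfrac12)$. Everything after that merely transports $2h(\tfrac12)$ through direct sums and compressions; in fact the super-multiplicativity of $h$ assumed in the statement is never used in these equivalences — only $h\ge 0$ and $h\not\equiv 0$ enter.
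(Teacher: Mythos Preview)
Your proposal is correct and is exactly what the paper intends: the authors do not give a proof at all, writing only ``we can prove the following theorem as Theorem 1.9 in \cite{FMPS}. So, we omit the proof'', and your sketch is precisely the Hansen--Pedersen argument from that reference, adapted so that the mid-convexity step produces the factor $2h(\tfrac12)$. Your side remark that the super-multiplicativity hypothesis on $h$ is not actually used in these equivalences is also accurate.
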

%-----------------------------------------------------
Using an idea of \cite{FMPS} we prove the following result.
\begin{theorem}[Davis-Choi-Jensen's inequality]
	Let $\Phi$ be a normalized positive linear map and $f$ be an operator $h$-convex function on an interval $I$, then
	\begin{equation}\label{eqDCJ}
	f(\Phi(A)) \le 2h\left(\frac{1}{2}\right) \Phi(f(A))\,,
	\end{equation}
	for every self-adjoint  operator $A$ with $\sigma(A)\subseteq I$.
\end{theorem}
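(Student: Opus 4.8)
The plan is to reduce the statement to the already-established Jensen-type operator inequality of Theorem~\ref{jtoi}, exactly as in the classical Davis--Choi theory where the Davis--Choi--Jensen inequality is derived from the multiple-operator Jensen inequality by realizing a normalized positive linear map as a compression. First I would invoke the Stinespring-type representation: for a normalized positive linear map $\Phi:\mathbb{B}(\mathcal{H})\to\mathbb{B}(\mathcal{K})$ there is a Hilbert space, a $*$-representation $\pi:\mathbb{B}(\mathcal{H})\to\mathbb{B}(\widetilde{\mathcal{H}})$ and an isometry $V:\mathcal{K}\to\widetilde{\mathcal{H}}$ with $\Phi(X)=V^{*}\pi(X)V$ for all $X$; since $\Phi$ is normalized, $V^{*}V=1_{\mathcal{K}}$. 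Because $\pi$ is a $*$-homomorphism it commutes with the continuous functional calculus, so $\pi(f(A))=f(\pi(A))$ and $\sigma(\pi(A))\subseteq\sigma(A)\subseteq I$.

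Next, setting $B:=\pi(A)$ and $C:=V$, condition (ii) of Theorem~\ref{jtoi} gives, from operator $h$-mid-convexity of $f$, the inequality $f(C^{*}BC)\le 2h(\tfrac12)\,C^{*}f(B)C$. Translating back through the representation, $C^{*}BC=V^{*}\pi(A)V=\Phi(A)$ and $C^{*}f(B)C=V^{*}\pi(f(A))V=\Phi(f(A))$, which is precisely \eqref{eqDCJ}. The only hypothesis of Theorem~\ref{jtoi} I need is the equivalence (i)$\Leftrightarrow$(ii), and operator $h$-convexity (the hypothesis here) trivially implies operator $h$-mid-convexity by taking $t=\tfrac12$ in \eqref{eq111}, so (i) of Theorem~\ref{jtoi} holds; the super-multiplicativity of $h$ assumed there is not needed for the direction (i)$\Rightarrow$(ii) that we use.

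Alternatively, if one prefers to avoid Stinespring entirely, I would argue directly on vector states: fix a unit vector $\xi\in\mathcal{K}$ and note that $X\mapsto\langle\Phi(X)\xi,\xi\rangle$ is a state on $\mathbb{B}(\mathcal{H})$; but in fact the cleanest self-contained route is to observe that any normalized positive linear map on $\mathbb{B}(\mathcal{H})$ with $\mathcal{H}$ finite-dimensional can be written as $\Phi(X)=\sum_{j}C_{j}^{*}XC_{j}$ with $\sum_{j}C_{j}^{*}C_{j}=1_{\mathcal{K}}$ (a Choi--Kraus decomposition), and then apply condition (iv) of Theorem~\ref{jtoi} directly: $f(\Phi(A))=f(\sum_j C_j^{*}A_jC_j)\le 2h(\tfrac12)\sum_j C_j^{*}f(A_j)C_j=2h(\tfrac12)\Phi(f(A))$ with all $A_j=A$. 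The main obstacle is purely expository rather than mathematical: one must make sure the representation/decomposition step is stated at the right level of generality (general Hilbert spaces require Stinespring; the Choi--Kraus form is cleanest in finite dimensions), and one must check that the functional calculus passes through the representation so that $f$ applied after $\Phi$ matches $\Phi$ applied after $f$ — this compatibility is exactly what makes the reduction to Theorem~\ref{jtoi} work, and it is where I would be most careful.
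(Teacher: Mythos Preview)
Your overall strategy---reduce to Theorem~\ref{jtoi}---is exactly the paper's, but the reduction itself is different. The paper never invokes Stinespring or Choi--Kraus. Instead it approximates $A$ uniformly by a simple operator $A'=\sum_j t_jE_j$ with $\{E_j\}$ a spectral resolution of $I_{\mathcal H}$, sets $C_j:=\sqrt{\Phi(E_j)}$ (so that $\sum_jC_j^*C_j=\sum_j\Phi(E_j)=I_{\mathcal K}$ by normality of $\Phi$), and applies part~(iv) of Theorem~\ref{jtoi} with the \emph{scalar} operators $A_j=t_jI$. The inequality $f\bigl(\sum_jC_jt_jC_j\bigr)\le 2h(\tfrac12)\sum_jC_jf(t_j)C_j$ then unwinds to $f(\Phi(A'))\le 2h(\tfrac12)\,\Phi(f(A'))$, and continuity of $\Phi$ and $f$ gives the result for $A$. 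Your dilation route via part~(ii) is cleaner when it applies---no approximation, no passage to a limit---while the paper's argument is more elementary in that it uses nothing about $\Phi$ beyond positivity and linearity on spectral projections.

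There is, however, a genuine gap in your argument as written: both the Stinespring representation and the Choi--Kraus decomposition require \emph{complete} positivity of $\Phi$, whereas the theorem (and the paper throughout) assumes only that $\Phi$ is positive. A merely positive map on $\mathbb{B}(\mathcal H)$ need not be of the form $V^*\pi(\,\cdot\,)V$. The standard repair is to restrict $\Phi$ to the commutative $C^*$-subalgebra $C^*(A,I_{\mathcal H})$ generated by $A$; positivity on a commutative algebra implies complete positivity, so Stinespring applies there, and the resulting representation of $C^*(A,I_{\mathcal H})$ still intertwines the continuous functional calculus, which is all your argument uses. The paper's spectral-approximation approach sidesteps this issue altogether.
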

\begin{proof}
	We know that a self-adjoint operator $A$ can be approximated uniformly by a simple function $A'=\sum_{j}t_jE_j$ where $\{E_j\}$ is a decomposition of the unit $I_{\mathcal H}$. By using normality of $\Phi$, we get $\sum_{j}\Phi(E_j)=I_{\mathcal K}$. By applying (iv) of Theorem \ref{jtoi} to $C_j=\sqrt{\Phi(E_j)}$, we have 
	\begin{align*}
	f(\Phi(A'))= & f\left(\sum_{j}t_j\Phi(E_j)\right)=f\left(\sum_{j}C_jt_jC_j\right)\\
	\le & 2h\left(\frac{1}{2}\right)\sum_{j}C_jf(t_j)C_j=2h\left(\frac{1}{2}\right)\sum_{j}f(t_j)\Phi(E_j)\\
	= & 2h\left(\frac{1}{2}\right)\Phi\left(\sum_{j}f(t_j)E_j\right)=2h\left(\frac{1}{2}\right)\Phi(f(A'))\,.
	\end{align*}
	Since $\Phi$ is continuous, the proof is complete.
\end{proof}
%-----------------------------------------------------
\begin{theorem}\label{t5.3}
	Let $t_1, t_2, \cdots, t_n$ be positive real numbers $(n \ge 2)$ such that $\sum_{j=1}^nt_j=1$. If $h$ is non-negative super-multiplicative function and if $f$ is $h$-convex function on an interval $I\subseteq\mathbb{R}$, $A_1, \cdots, A_n$ are self-adjoint operators in $\mathbb{B}(\mathcal{H})$ such that $\sigma(A_j)\subseteq I$, then 
	\begin{equation}\label{Jen}
	f\left(\sum_{j=1}^nt_jA_j\right) \le \sum_{j=1}^{n}h(t_j)f(A_j)\,.
	\end{equation}
	If $h$ is sub-multiplicative and $f$ is operator $h$-concave on $I$, then inequality \eqref{Jen} is reversed. 
\end{theorem}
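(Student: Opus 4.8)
The plan is to prove \eqref{Jen} by induction on $n$, following the pattern of Varo\v{s}anec's scalar Jensen-type inequality for $h$-convex functions cited above, but carried out at the level of self-adjoint operators. (Here I read ``$f$ is $h$-convex'' as ``$f$ is operator $h$-convex'' in the sense of the definition of this section; this matches the concluding clause on operator $h$-concave functions and is what the argument needs, since for merely scalar-convex $f$ the conclusion already fails for non-commuting $A_j$.) The only operator-theoretic facts I will use are: (a) a convex combination $\sum_j s_j A_j$ with $s_j\ge 0$, $\sum_j s_j=1$, of self-adjoint operators whose spectra lie in the interval $I$ again has spectrum in $I$; (b) if $X\le Y$ in $\mathbb{B}_h(\mathcal H)$ and $c\ge 0$ is a scalar, then $cX\le cY$; and (c) $f(A_j)\ge 0$, since $f$ is non-negative on $I\supseteq\sigma(A_j)$.

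For the base case $n=2$ we have $t_1\in(0,1)$ and $t_2=1-t_1$, so \eqref{Jen} is exactly the defining inequality \eqref{eq111} of operator $h$-convexity applied to $A_1,A_2$ with $t=t_1$. For the inductive step, assume the result for $n-1\ (\ge 2)$ terms, and let $t_1,\dots,t_n>0$ with $\sum_{j=1}^n t_j=1$. Put $T_{n-1}=\sum_{j=1}^{n-1}t_j=1-t_n\in(0,1)$ and $s_j=t_j/T_{n-1}>0$ for $1\le j\le n-1$, so $\sum_{j=1}^{n-1}s_j=1$. With $B:=\sum_{j=1}^{n-1}s_jA_j$ we get $\sigma(B)\subseteq I$ by (a) and
$$
\sum_{j=1}^{n}t_jA_j=t_nA_n+(1-t_n)B.
$$
Operator $h$-convexity of $f$ with $t=t_n$ then gives
$$
f\Big(\sum_{j=1}^{n}t_jA_j\Big)\le h(t_n)f(A_n)+h(1-t_n)f(B),
$$
while the induction hypothesis applied to $B$ yields $f(B)\le\sum_{j=1}^{n-1}h(s_j)f(A_j)$.

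Next I would multiply this last inequality by the non-negative scalar $h(1-t_n)=h(T_{n-1})$ (using (b)) and apply super-multiplicativity of $h$ in the form
$$
h(T_{n-1})h(s_j)=h(T_{n-1})\,h\!\left(\tfrac{t_j}{T_{n-1}}\right)\le h\!\left(T_{n-1}\cdot\tfrac{t_j}{T_{n-1}}\right)=h(t_j),
$$
together with $f(A_j)\ge 0$ from (c), to obtain $h(1-t_n)f(B)\le\sum_{j=1}^{n-1}h(t_j)f(A_j)$; combining with the previous display proves \eqref{Jen}. For the last assertion, when $h$ is sub-multiplicative and $f$ is operator $h$-concave the same computation runs verbatim with every inequality reversed: operator $h$-concavity reverses the first estimate, the reversed induction hypothesis the second, sub-multiplicativity now gives $h(T_{n-1})h(s_j)\ge h(t_j)$, and the uses of (b) and (c) are insensitive to the reversal, so \eqref{Jen} is reversed. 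I do not expect a genuine obstacle; the point needing the most care is verifying (a) for a general (possibly open or unbounded) interval $I$---compactness of each $\sigma(A_j)$ reduces it to a closed bounded interval, where it is immediate from $aI_{\mathcal H}\le A_j\le bI_{\mathcal H}$---and making sure super-multiplicativity is used in the direction that, paired with $f(A_j)\ge 0$, contracts the right-hand side.
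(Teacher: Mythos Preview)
Your proof is correct and follows essentially the same route as the paper: induction on $n$, splitting off the $n$-th term, applying operator $h$-convexity to the pair $(A_n,B)$, invoking the induction hypothesis on $B$, and then using super-multiplicativity of $h$ together with $f(A_j)\ge 0$ to collapse $h(T_{n-1})h(t_j/T_{n-1})$ to $h(t_j)$. You are in fact more careful than the paper in several places---you make explicit the need for $\sigma(B)\subseteq I$, the role of non-negativity of $f$, and the (correct) reading of ``$h$-convex'' as ``operator $h$-convex''---but there is no substantive difference in strategy.
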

\begin{proof}
	We prove this theorem by induction on $n$. If $n=2$, then inequality \eqref{Jen} is equivalent to inequality \eqref{eq111} with $t=t_1$ and $1-t=t_2$. Assume that inequality \eqref{Jen} holds for $n-1$. Then for $n$, we have 
	{\footnotesize
		\begin{align*}
		f\left(\sum_{j=1}^nt_jA_j\right)=&f\left(t_nA_n+\sum_{j=1}^{n-1}t_jA_j\right)\\
		=&f\left(t_nA_n+(t_1+\cdots+t_{n-1})\sum_{j=1}^{n-1}\frac{t_j}{t_1+\cdots+t_{n-1}}A_j\right)\\
		\le & h(t_n)f(A_n)+h(t_1+\cdots+t_{n-1})f\left(\sum_{j=1}^{n-1}\frac{t_j}{t_1+\cdots+t_{n-1}}A_j\right)\\
		\le & h(t_n)f(A_n)+h(t_1+\cdots+t_{n-1})\sum_{j=1}^{n-1}h\left(\frac{t_j}{t_1+\cdots+t_{n-1}}\right)f(A_j)\\
		\le & h(t_n)f(A_n)+\sum_{j=1}^{n-1}h(t_j)f(A_j)\\
		=&\sum_{j=1}^{n}h(t_j)f(A_j)\,.
		\end{align*}
	}
	This completes the proof.
\end{proof}
\begin{corollary}
	By assumptions of Theorem \ref{t5.3}, if $\Phi\in\mathbf{P}_N[\mathbb{B}(\mathcal{H}), \mathbb{B}(\mathcal{K})]$, then 
	\begin{equation}\label{Jen2}
	f\left(\sum_{j=1}^nt_j\Phi(A_j)\right) \le \sum_{j=1}^{n}2h\left(\frac{t_j}{2}\right)\Phi(f(A_j))\,.
	\end{equation}
	If $h$ is sub-multiplicative and $f$ is operator $h$-concave on $I$, then inequality \eqref{Jen2} is reversed.
\end{corollary}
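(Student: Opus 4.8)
The plan is to obtain \eqref{Jen2} by gluing together the operator Jensen inequality of Theorem~\ref{t5.3} with the Davis--Choi--Jensen inequality \eqref{eqDCJ}, using super-multiplicativity of $h$ to absorb the factor $h(\tfrac12)$. First I would use linearity of $\Phi$ to write $\sum_{j=1}^n t_j\Phi(A_j)=\Phi\!\left(\sum_{j=1}^n t_j A_j\right)$, and set $B=\sum_{j=1}^n t_j A_j$. Since $I$ is an interval (hence convex) and $\sigma(A_j)\subseteq I$ for every $j$, the convex combination $B$ is self-adjoint with $\sigma(B)\subseteq I$: indeed if $mI_{\mathcal H}\le A_j\le MI_{\mathcal H}$ with $[m,M]\subseteq I$, then $mI_{\mathcal H}\le B\le MI_{\mathcal H}$. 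Hence \eqref{eqDCJ} applies to $B$ and gives $f(\Phi(B))\le 2h(\tfrac12)\Phi(f(B))$.

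Next I would apply Theorem~\ref{t5.3} to the operators $A_1,\dots,A_n$, obtaining $f(B)=f\!\left(\sum_{j=1}^n t_jA_j\right)\le\sum_{j=1}^n h(t_j)f(A_j)$. Applying the positive linear map $\Phi$ preserves this operator inequality, so $\Phi(f(B))\le\sum_{j=1}^n h(t_j)\Phi(f(A_j))$, and multiplying by the non-negative scalar $2h(\tfrac12)$ keeps the order. Finally, super-multiplicativity of $h$ gives $h\!\left(\tfrac{t_j}{2}\right)=h\!\left(t_j\cdot\tfrac12\right)\ge h(t_j)h(\tfrac12)$ for each $j$, i.e. $2h(\tfrac12)h(t_j)\le 2h\!\left(\tfrac{t_j}{2}\right)$. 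Chaining the three estimates yields $f\!\left(\sum_j t_j\Phi(A_j)\right)=f(\Phi(B))\le 2h(\tfrac12)\Phi(f(B))\le 2h(\tfrac12)\sum_j h(t_j)\Phi(f(A_j))\le\sum_j 2h\!\left(\tfrac{t_j}{2}\right)\Phi(f(A_j))$, which is \eqref{Jen2}.

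For the reversed statement the same chain runs in the opposite direction: when $f$ is operator $h$-concave and $h$ is sub-multiplicative, Theorem~\ref{t5.3} gives $f(B)\ge\sum_j h(t_j)f(A_j)$, hence $\Phi(f(B))\ge\sum_j h(t_j)\Phi(f(A_j))$, while the operator $h$-concave analogue of \eqref{eqDCJ} — obtained by reversing the equivalences in Theorem~\ref{jtoi}, whose proof is the mirror image — gives $f(\Phi(B))\ge 2h(\tfrac12)\Phi(f(B))$; sub-multiplicativity now reads $h\!\left(\tfrac{t_j}{2}\right)\le h(t_j)h(\tfrac12)$, so all three inequalities flip and yield the reverse of \eqref{Jen2}. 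The argument is essentially bookkeeping; the only points needing care are that $\sigma(B)\subseteq I$ (so $f(B)$ and \eqref{eqDCJ} make sense), getting the direction of the (super/sub)-multiplicative estimate right, and, in the reversed case, recording the operator $h$-concave version of the Davis--Choi--Jensen inequality used above.
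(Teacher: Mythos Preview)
Your proof is correct and uses exactly the same three ingredients as the paper: Theorem~\ref{t5.3}, the Davis--Choi--Jensen inequality~\eqref{eqDCJ}, and super-multiplicativity of $h$. The only difference is the order in which the first two are applied. The paper first applies Theorem~\ref{t5.3} directly to the operators $\Phi(A_1),\dots,\Phi(A_n)\in\mathbb{B}_h(\mathcal K)$ to obtain $f\bigl(\sum_j t_j\Phi(A_j)\bigr)\le\sum_j h(t_j)f(\Phi(A_j))$, and then invokes~\eqref{eqDCJ} termwise on each $f(\Phi(A_j))$; you instead collapse $\sum_j t_j\Phi(A_j)=\Phi(B)$, apply~\eqref{eqDCJ} once to $B$, and then use Theorem~\ref{t5.3} inside $\Phi$. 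Both routes arrive at $2h(\tfrac12)\sum_j h(t_j)\Phi(f(A_j))$ before the final super-multiplicative step, so the distinction is purely organizational. Your spectral check that $\sigma(B)\subseteq I$ plays the same role as the implicit check $\sigma(\Phi(A_j))\subseteq I$ needed in the paper's ordering.
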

\begin{proof}
	If $\Phi\in\mathbf{P}_N[\mathbb{B}(\mathcal{H}), \mathbb{B}(\mathcal{K})]$, then
	\begin{align*}
	f\left(\sum_{j=1}^nt_j\Phi(A_j)\right)\le &\sum_{j=1}^{n}h(t_j)f(\Phi(A_j))\\
	\le & \sum_{j=1}^n2h(t_j)h\left(\frac{1}{2}\right)\Phi(f(A_j))\quad(\text{by} ~\eqref{eqDCJ})\\
	\le & \sum_{j=1}^n2h\left(\frac{t_j}{2}\right)\Phi(f(A_j))\quad(\text{by super-multiplicity of } h)\,.
	\end{align*}
\end{proof}
For convenience, let $\varphi(t)$ be a real valued continuous function on the interval $[m, M]$. Define 
$$
\mu_\varphi=\frac{\varphi(M)-\varphi(m)}{M-m}\,,\quad \nu_\varphi=\frac{M\varphi(m)-m\varphi(M)}{M-m}\,.
$$
We remark that a straight line $\ell(t)=\mu_\varphi t+\nu_\varphi$ is a line thought two points $(m, \varphi(m))$ and $(M, \varphi(M))$.

Notice that, if $\varphi(t)=t$, then $\mu_\varphi=1$ and $\nu_\varphi=0$, if $\varphi(t)=1$, then $\mu_\varphi=0$ and $\nu_\varphi=1$, and if $\varphi(t)=\frac{1}{t}$, then $\mu_\varphi=-\frac{1}{mM}$ and $\nu_\varphi=\frac{m+M}{mM}$.

\begin{theorem}\label{t5.5}
	Let $A_1, A_2, \cdots, A_n\in\mathbb{B}(\mathcal H)$ be self-adjoint operators with spectra in $[m, M]$ for some scalars $m, M$ and $\Phi_1, \Phi_2, \cdots, \Phi_n\in \mathbf{P}_N[\mathbb{B}(\mathcal{H}), \mathbb{B}(\mathcal{K})]$ and $t_1, \cdots, t_n$ non-negative real numbers with $\sum_{j=1}^nt_j=1$. If $f$ on $[m, M]$ is operator $h$-convex function and $h$ on the interval $J$ is super-multiplicative operator convex function, then
	\begin{align}\label{eq5.5}
	f & \left(mI_{\mathcal K}+MI_{\mathcal K}-\sum_{j=1}^nt_j\Phi_j(A_j)\right)\\
	& \le h\left(\frac{\sum_{j=1}^nt_j\Phi_j(A_j)-mI_{\mathcal K}}{M-m}\right)f(m)+h\left(\frac{MI_{\mathcal K}-\sum_{j=1}^nt_j\Phi_j(A_j)}{M-m}\right)f(M)\nonumber\\
	& \le (\mu_h+2\nu_h)\big(f(m)+f(M)\big)I_{\mathcal K}-\sum_{j=1}^nh(t_j)\Phi_j(f(A_j))\,.\nonumber
	\end{align}
\end{theorem}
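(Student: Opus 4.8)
The plan is to prove the two inequalities in \eqref{eq5.5} in turn. Write $S=\sum_{j=1}^{n}t_j\Phi_j(A_j)$. Since each $A_j$ has spectrum in $[m,M]$ and each $\Phi_j$ is positive and unital, $mI_{\mathcal K}\le\Phi_j(A_j)\le MI_{\mathcal K}$, so $mI_{\mathcal K}\le S\le MI_{\mathcal K}$ and hence also $mI_{\mathcal K}\le mI_{\mathcal K}+MI_{\mathcal K}-S\le MI_{\mathcal K}$. Put $C=\frac{S-mI_{\mathcal K}}{M-m}$ and $D=\frac{MI_{\mathcal K}-S}{M-m}$, so that $C,D\ge0$, $C+D=I_{\mathcal K}$, $mI_{\mathcal K}+MI_{\mathcal K}-S=mC+MD$, and, by linearity and unitality of the $\Phi_j$, $C=\sum_j t_j\Phi_j\!\big(\tfrac{A_j-mI_{\mathcal H}}{M-m}\big)$ and $D=\sum_j t_j\Phi_j\!\big(\tfrac{MI_{\mathcal H}-A_j}{M-m}\big)$.

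For the first inequality I would use functional calculus. Both $C$ and $D$ are continuous functions of the single self-adjoint operator $S$, hence commute, and $mC+MD$ has spectrum in $[m,M]$; spectrally, $f(mC+MD)\le h(C)f(m)+h(D)f(M)$ is nothing but the scalar estimate $f(\alpha m+(1-\alpha)M)\le h(\alpha)f(m)+h(1-\alpha)f(M)$ (the $h$-convexity of $f$, contained in operator $h$-convexity) applied pointwise over the spectrum of $C$ and integrated against the spectral measure of $S$ (approximating $S$ if its spectrum meets the endpoints). This yields the first line of \eqref{eq5.5}.

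For the second inequality I would begin with $h(C)f(m)+h(D)f(M)=\big[h(C)+h(D)\big]\big(f(m)+f(M)\big)-\big[h(C)f(M)+h(D)f(m)\big]$. Convexity of $h$ gives $h(u)\le\nu_h+\mu_h u$ on $[0,1]$, with $\mu_h,\nu_h$ the slope and intercept of the chord of $h$ over the interval on which $C,D$ take their values; by functional calculus $h(C)\le\nu_h I_{\mathcal K}+\mu_h C$ and likewise for $D$, so $h(C)+h(D)\le(\mu_h+2\nu_h)I_{\mathcal K}$ since $C+D=I_{\mathcal K}$. It then remains to prove $h(C)f(M)+h(D)f(m)\ge\sum_j h(t_j)\Phi_j(f(A_j))$. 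My approach here is: for each $j$ put $w_j=\frac{MI_{\mathcal H}-A_j}{M-m}$, a positive contraction with $A_j=mw_j+M(I_{\mathcal H}-w_j)$, and use the operator $h$-convexity of $f$ (through the functional calculus of $A_j$) to get $f(A_j)\le h(w_j)f(m)+h(I_{\mathcal H}-w_j)f(M)$; then apply $\Phi_j$, multiply by $h(t_j)$, and sum. One is left comparing $h(D)$ with $\sum_j h(t_j)\Phi_j\big(h(w_j)\big)$ and $h(C)$ with $\sum_j h(t_j)\Phi_j\big(h(I_{\mathcal H}-w_j)\big)$, which I would handle by writing $C,D$ as $\Psi$-images of block operators, with $\Psi(\bigoplus X_j)=\sum_j t_j\Phi_j(X_j)$ a unital positive map, invoking the Davis--Choi--Jensen inequality \eqref{jen} for the operator convex function $h$ together with the super-multiplicativity $h(t_j)h(u)\le h(t_j u)$, and closing with the endpoint bound $h(u)+h(1-u)\le\mu_h+2\nu_h$ on $[0,1]$ and the normalization $\sum_j h(t_j)\le1$; alternatively, the extraction of $\sum_j h(t_j)\Phi_j(\cdot)$ can be routed through Theorem \ref{t5.3}.

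The hardest part is this last comparison. The operators $h(C),h(D)$ come out with the weights $t_j$, whereas the term to be split off carries the weights $h(t_j)$, so operator convexity of $h$ and super-multiplicativity of $h$ must be played against each other in exactly the right order; a careless application produces inequalities pointing the wrong way — for instance bounding $f(\Phi_j(A_j))$ where $\Phi_j(f(A_j))$ is wanted, or letting the Davis--Choi--Jensen bound for $h$ overshoot. The key is to bound each $f(A_j)$ from above by its endpoint ($h$-convex) expression \emph{before} applying $\Phi_j$, keep $\Phi_j$ outside $f$ at every stage, and only afterwards absorb $h(C),h(D)$ using operator convexity of $h$; tracking the constant $\mu_h+2\nu_h$ through these steps is the remaining bookkeeping.
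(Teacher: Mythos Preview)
Your treatment of the first inequality is the same as the paper's: both apply the scalar bound $f(\alpha m+(1-\alpha)M)\le h(\alpha)f(m)+h(1-\alpha)f(M)$ through the spectral calculus of $S=\sum_jt_j\Phi_j(A_j)$.

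The gap is in the second inequality. After writing
\[
h(C)f(m)+h(D)f(M)=\bigl[h(C)+h(D)\bigr]\bigl(f(m)+f(M)\bigr)-\bigl[h(C)f(M)+h(D)f(m)\bigr]
\]
and bounding $h(C)+h(D)\le(\mu_h+2\nu_h)I_{\mathcal K}$, you are forced to prove
\[
h(C)f(M)+h(D)f(m)\ \ge\ \sum_{j=1}^nh(t_j)\,\Phi_j\bigl(f(A_j)\bigr),
\]
and the tools you invoke point the wrong way. With $\Psi(\bigoplus X_j)=\sum_jt_j\Phi_j(X_j)$ unital positive and $h$ operator convex, Davis--Choi--Jensen \eqref{jen} gives
\[
h(D)=h\bigl(\Psi(\textstyle\bigoplus w_j)\bigr)\ \le\ \sum_jt_j\,\Phi_j\bigl(h(w_j)\bigr),
\]
an \emph{upper} bound on $h(D)$; you need a \emph{lower} bound. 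Super-multiplicativity only yields $h(t_j)h(w_j)\le h(t_jw_j)$, which does not reverse the direction, and the ``normalization'' $\sum_jh(t_j)\le1$ is not among the hypotheses (for $h\equiv1$ it equals $n$). Concretely, even the termwise inequality $h(D)\ge\sum_jh(t_j)\Phi_j(h(w_j))$ fails for $h(u)=u^2$, $n=2$, $t_1=t_2=\tfrac12$, $\Phi_j=\mathrm{id}$: it reduces to $w_1w_2+w_2w_1\ge0$, which is false for generic noncommuting contractions. Theorem~\ref{t5.3} does not help either, since it bounds $f$ of a convex combination from above, not $\sum_jh(t_j)f(A_j)$.

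The paper avoids this comparison altogether. It never seeks a lower bound on $h(C)$ or $h(D)$; instead it linearizes $h$ by its chord $h(u)\le\mu_hu+\nu_h$ on \emph{both} sides. One application (after super-multiplicativity, inside each $\Phi_j$) produces the auxiliary bound
\[
\sum_{j=1}^nh(t_j)\,\Phi_j\bigl(f(A_j)\bigr)\ \le\ \mu_h\bigl(\mu_fS+\nu_fI_{\mathcal K}\bigr)+\nu_h\bigl(f(m)+f(M)\bigr)I_{\mathcal K},
\]
while a second application to $h(C)f(m)+h(D)f(M)$ gives
\[
(\mu_h+\nu_h)\bigl(f(m)+f(M)\bigr)I_{\mathcal K}-\mu_h\bigl(\mu_fS+\nu_fI_{\mathcal K}\bigr).
\]
The common term $\mu_h(\mu_fS+\nu_fI_{\mathcal K})$ lets one substitute the first estimate into the second, and the extra $\nu_h(f(m)+f(M))$ that appears accounts precisely for the $2\nu_h$ in $\mu_h+2\nu_h$. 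The whole argument is affine in $S$ once $h$ is replaced by its chord; no Jensen-type lower bound on $h$ is ever needed.
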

\begin{proof}
	Define the function $g:[m, M]\to\mathbb{R}$ by $g(t)=f(m+M-t)$. Since $f$ is continuous and $h$-convex on $[m, M]$, so the same is true for $g$. Consequently, for every $t\in[m, M]$ 
	\begin{equation}\label{e5.5}
	f(t) \le h\left(\frac{t-m}{M-m}\right)f(M)+h\left(\frac{M-t}{M-m}\right)f(m)\,,
	\end{equation}
	and 
	\begin{equation}\label{e5.6}
	g(t) \le h\left(\frac{t-m}{M-m}\right)g(M)+h\left(\frac{M-t}{M-m}\right)g(m)\,.
	\end{equation}
	Since $\sum_{j=1}^nt_j=1$, $\Phi_j(I_{\mathcal H})=I_{\mathcal K}$ and $mI_{\mathcal H} \le A_j \le MI_{\mathcal H}~(j=1, \cdots, n)$, we conclude that $mI_{\mathcal K} \le \sum_{j=1}^nt_j\Phi_j(A_j) \le MI_{\mathcal K}$. Now, by using functional calculus and \eqref{e5.5} and super-multiplicity of $h$, we get
	\begin{align}\label{eq5.7}
	h(t_j)f(A_j) \le & h(t_j)h\left(\frac{A_j-mI_{\mathcal H}}{M-m}\right)f(M)+h(t_j)h\left(\frac{MI_{\mathcal H}-A_j}{M-m}\right)f(m)\nonumber\\
	\le & h\left(\frac{t_jA_j-mt_jI_{\mathcal H}}{M-m}\right)f(M)+h\left(\frac{Mt_jI_{\mathcal H}-t_jA_j}{M-m}\right)f(m)\nonumber\\
	\le & \left[\mu_h\left(\frac{t_jA_j-mt_jI_{\mathcal H}}{M-m}\right)+\nu_hI_{\mathcal H}\right]f(M)\\
	&+\left[\mu_h\left(\frac{Mt_jI_{\mathcal H}-t_jA_j}{M-m}\right)+\nu_hI_{\mathcal H}\right]f(m)\nonumber\,.
	\end{align}
	Hence, by linearity of $\Phi_j$ for every $j=1, \cdots, n$ and the inequality \eqref{eq5.7}, we have 
	\begin{align*}
	h(t_j)\Phi_j(f(A_j)) \le & \left[\mu_h\left(\frac{t_j\Phi_j(A_j)-mt_jI_{\mathcal K}}{M-m}\right)+\nu_hI_{\mathcal K}\right]f(M)\\
	&+\left[\mu_h\left(\frac{Mt_jI_{\mathcal K}-t_j\Phi_j(A_j)}{M-m}\right)+\nu_hI_{\mathcal K}\right]f(m)\,.
	\end{align*}
	By summing of all $j=1, \cdots, n$
	\begin{align}\label{eq5.9}
	\sum_{j=1}^{n}h(t_j)\Phi_j(f(A_j)) \le & \left[\mu_h\left(\frac{\sum_{j=1}^{n}t_j\Phi_j(A_j)-mI_{\mathcal K}}{M-m}\right)+\nu_hI_{\mathcal K}\right]f(M)\\
	&+\left[\mu_h\left(\frac{MI_{\mathcal K}-\sum_{j=1}^{n}t_j\Phi_j(A_j)}{M-m}\right)+\nu_hI_{\mathcal K}\right]f(m)\nonumber\\
	= & \mu_h\left[\mu_f\sum_{j=1}^nt_j\Phi_j(A_j)+\nu_fI_{\mathcal K}\right]+\nu_h\big(f(m)+f(M)\big)I_{\mathcal K}\nonumber
	\end{align}
	Also, using similar way and the \eqref{e5.6} inequality, we have
	\begin{align*}
	g & \left(\sum_{j=1}^nt_j\Phi_j(A_j)\right)\\
	&\le h\left(\frac{\sum_{j=1}^nt_j\Phi_j(A_j)-mI_{\mathcal K}}{M-m}\right)f(m)+h\left(\frac{MI_{\mathcal K}-\sum_{j=1}^nt_j\Phi_j(A_j)}{M-m}\right)f(M)
	\end{align*}
	or equivalently,
	\begin{align*}%\label{eq5.10}
	f & \left(mI_{\mathcal K}+MI_{\mathcal K}-\sum_{j=1}^nt_j\Phi_j(A_j)\right)= g\left(\sum_{j=1}^nt_j\Phi_j(A_j)\right)\\
	\le &  h\left(\frac{\sum_{j=1}^nt_j\Phi_j(A_j)-mI_{\mathcal K}}{M-m}\right)f(m)+h\left(\frac{MI_{\mathcal K}-\sum_{j=1}^nt_j\Phi_j(A_j)}{M-m}\right)f(M)\\
	\le& \left[\mu_h\left(\frac{{\sum_{j=1}^nt_j\Phi_j(A_j)-mI_{\mathcal K}}}{M-m}\right)+\nu_hI_{\mathcal K}\right]f(m)\\
	&+\left[\mu_h\left(\frac{{MI_{\mathcal K}-\sum_{j=1}^nt_j\Phi_j(A_j)}}{M-m}\right)+\nu_hI_{\mathcal K}\right]f(M)\\
	= & \mu_h\Bigg[f(m)I_{\mathcal K}+f(M)I_{\mathcal K}-\Bigg(\frac{{MI_{\mathcal K}-\sum_{j=1}^nt_j\Phi_j(A_j)}}{M-m}f(m)\nonumber\\
	& +\frac{{\sum_{j=1}^nt_j\Phi_j(A_j)-mI_{\mathcal K}}}{M-m}f(M)\Bigg)\Bigg]+\nu_h\big(f(m)+f(M)\big)I_{\mathcal K}\nonumber\\
	= & \mu_h\left[\big(f(m)+f(M)\big)I_{\mathcal K}-\left(\mu_f\sum_{j=1}^nt_j\Phi_j(A_j)+\nu_fI_{\mathcal K}\right)\right]+\nu_h\big(f(m)+f(M)\big)I_{\mathcal K}\\
	\le & (\mu_h+2\nu_h)\big(f(m)+f(M)\big)I_{\mathcal K}-\sum_{j=1}^nh(t_j)\Phi_j(f(A_j))\,.\nonumber
	\end{align*}
	In the final inequality, we use the inequality \eqref{eq5.9}, and we obtain desired inequalities \eqref{eq5.5}.
\end{proof}
\begin{corollary}
	\begin{enumerate}
		\item[(i)] Let $h(t)=t$ in Theorem \ref{t5.5}, then we have
		\begin{align*}
		f & \left(mI_{\mathcal K}+MI_{\mathcal K}-\sum_{j=1}^nt_j\Phi_j(A_j)\right)\\
		& \le \left(\frac{\sum_{j=1}^nt_j\Phi_j(A_j)-mI_{\mathcal K}}{M-m}\right)f(m)+\left(\frac{MI_{\mathcal K}-\sum_{j=1}^nt_j\Phi_j(A_j)}{M-m}\right)f(M)\\
		& \le f(m)I_{\mathcal K}+f(M)I_{\mathcal K}-\sum_{j=1}^nt_j\Phi_j(f(A_j))\,.
		\end{align*}
		\item[(ii)] Let $h(t)=1$ in Theorem \ref{t5.5}, then we have
		\begin{align*}
		f\left(mI_{\mathcal K}+MI_{\mathcal K}-\sum_{j=1}^nt_j\Phi_j(A_j)\right)\le 2\big(f(m)+f(M)\big)I_{\mathcal K}-\sum_{j=1}^nt_j\Phi_j(f(A_j))\,.
		\end{align*}
	\end{enumerate}
\end{corollary}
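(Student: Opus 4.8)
The plan is to obtain both items as immediate specializations of Theorem~\ref{t5.5}, taking $h(t)=t$ for part (i) and the constant function $h(t)\equiv 1$ for part (ii). The whole argument then reduces to two routine steps: verifying that each of these choices meets the hypotheses imposed on $h$ in Theorem~\ref{t5.5}, and evaluating the constants $\mu_h,\nu_h$ together with the weights $h(t_j)$ that occur in the chain \eqref{eq5.5}.

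For the hypothesis check, note that both $h(t)=t$ and $h(t)\equiv 1$ are non-negative, not identically zero, and affine on any interval $J\supseteq(0,1)$; since every affine function is operator convex, the operator-convexity requirement on $h$ holds in each case. Moreover each is multiplicative ($xy=x\cdot y$ and $1=1\cdot 1$), hence a fortiori super-multiplicative. Thus Theorem~\ref{t5.5} applies with either choice, and the remaining hypothesis, operator $h$-convexity of $f$, specializes to ``$f$ is operator convex'' for (i) and to ``$f(tA+(1-t)B)\le f(A)+f(B)$ for all admissible $A,B$ and $t\in(0,1)$'' for (ii).

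For the evaluation I would invoke the remark preceding Theorem~\ref{t5.5}: with $\varphi(t)=t$ one has $\mu_h=1$ and $\nu_h=0$, so $\mu_h+2\nu_h=1$ and $h(t_j)=t_j$; since the line $\ell(t)=\mu_h t+\nu_h$ here coincides with $h(t)$ itself, the inner terms $h\left(\frac{X-mI_{\mathcal K}}{M-m}\right)$ are left intact, and plugging everything into \eqref{eq5.5} yields (i) directly. With $\varphi(t)\equiv 1$ one has $\mu_h=0$ and $\nu_h=1$, so $\mu_h+2\nu_h=2$ and $h(t_j)=1$; then the middle line of \eqref{eq5.5} collapses to $f(m)I_{\mathcal K}+f(M)I_{\mathcal K}$ and the outer line to $2\big(f(m)+f(M)\big)I_{\mathcal K}-\sum_{j=1}^n h(t_j)\Phi_j(f(A_j))$ with $h(t_j)=1$, which is (ii). There is no real obstacle, since the corollary is a pure substitution; the only point needing a little care is the bookkeeping with the operator-valued arguments inside $h$ and the observation that, for these two particular functions, the chord $\ell$ equals $h$, so the step \eqref{eq5.7} in the proof of Theorem~\ref{t5.5} is an equality and the two displayed chains come out without any slack.
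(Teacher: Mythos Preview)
Your approach is exactly the paper's intended one: the corollary is stated without proof and is meant as a direct substitution into Theorem~\ref{t5.5}, and your verification of the hypotheses and computation of $\mu_h,\nu_h$ are correct.

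There is one small slip in part~(ii). Substituting $h\equiv 1$ into the right-hand side of \eqref{eq5.5} gives
\[
2\big(f(m)+f(M)\big)I_{\mathcal K}-\sum_{j=1}^n \Phi_j(f(A_j)),
\]
since $h(t_j)=1$, whereas the corollary as printed has the weighted sum $\sum_{j=1}^n t_j\,\Phi_j(f(A_j))$. These are not literally the same expression, so ``which is (ii)'' is not quite accurate. The printed inequality does follow, however: because $f\ge 0$ and each $\Phi_j$ is positive, $\Phi_j(f(A_j))\ge 0$, and since $0\le t_j\le 1$ we have $\sum_j t_j\Phi_j(f(A_j))\le \sum_j \Phi_j(f(A_j))$, hence
\[
-\sum_{j=1}^n \Phi_j(f(A_j))\le -\sum_{j=1}^n t_j\,\Phi_j(f(A_j)).
\]
Adding this one line closes the gap; without it your argument proves a (slightly sharper) inequality than the one actually stated.
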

With similar proof of Theorem \ref{t5.5}, we have the following proposition.
\begin{proposition}\label{p5.7}
	Let $A_1, A_2, \cdots, A_n\in\mathbb{B}(\mathcal H)$ be self-adjoint operators with spectra in $[m, M]$ for some scalars and $\Phi_1, \Phi_2, \cdots, \Phi_n\in \mathbf{P}[\mathbb{B}(\mathcal{H}), \mathbb{B}(\mathcal{K})]$ positive linear maps with $\sum_{j=1}^n\Phi_j(I_{\mathcal H})=I_{\mathcal K}$. If $f$ on $[m, M]$ is operator $h$-convex function and $h$ on the interval $J$ is operator convex function, then
	\begin{align*}
	f & \left(mI_{\mathcal K}+MI_{\mathcal K}-\sum_{j=1}^n\Phi_j(A_j)\right)\nonumber\\
	& \le h\left(\frac{\sum_{j=1}^n\Phi_j(A_j)-mI_{\mathcal K}}{M-m}\right)f(m)+h\left(\frac{MI_{\mathcal K}-\sum_{j=1}^n\Phi_j(A_j)}{M-m}\right)f(M)\\
	& \le (\mu_h+2\nu_h)\big(f(m)+f(M)\big)I_{\mathcal K}-\sum_{j=1}^n\Phi_j(f(A_j))\,.\nonumber
	\end{align*}
\end{proposition}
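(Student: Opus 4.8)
The plan is to run the proof of Theorem~\ref{t5.5} essentially verbatim, observing that the scalar weights $t_j$ and the super-multiplicativity of $h$ entered that argument only in order to pass from $h(t_j)h(X)$ to $h(t_jX)$; since Proposition~\ref{p5.7} carries no separate weights — its normalization is $\sum_{j=1}^n\Phi_j(I_{\mathcal H})=I_{\mathcal K}$ — that step is simply absent, and the weaker hypothesis (operator convexity of $h$, without super-multiplicativity) suffices.

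First I would introduce $g:[m,M]\to\mathbb{R}$, $g(t)=f(m+M-t)$, and verify that $g$ is continuous and operator $h$-convex on $[m,M]$: for self-adjoint $A,B$ with spectra in $[m,M]$ one has $tA+(1-t)B\mapsto (m+M)I-tA-(1-t)B=t\big((m+M)I-A\big)+(1-t)\big((m+M)I-B\big)$, and the reflected operators again have spectra in $[m,M]$, so operator $h$-convexity of $f$ passes to $g$. Restricting to scalars this already yields the chord-type bounds $f(t)\le h\big(\tfrac{t-m}{M-m}\big)f(M)+h\big(\tfrac{M-t}{M-m}\big)f(m)$ and its analogue for $g$, that is, \eqref{e5.5} and \eqref{e5.6}. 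I would then record that $mI_{\mathcal H}\le A_j\le MI_{\mathcal H}$, positivity of each $\Phi_j$, and $\sum_j\Phi_j(I_{\mathcal H})=I_{\mathcal K}$ force $mI_{\mathcal K}\le S\le MI_{\mathcal K}$, where $S:=\sum_{j=1}^n\Phi_j(A_j)$, so that $f(S)$ and $g(S)$ are defined.

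The computational core is the two-part estimate of Theorem~\ref{t5.5}. Applying \eqref{e5.5} to $A_j$ through functional calculus, and then the chord bound $h(X)\le\mu_hX+\nu_hI$ to the arguments $\tfrac{A_j-mI}{M-m}$ and $\tfrac{MI-A_j}{M-m}$ followed by multiplication through by the nonnegative scalars $f(M)$ and $f(m)$ — which is where nonnegativity of $f$ enters — gives an upper bound for $f(A_j)$ affine in $A_j$; applying $\Phi_j$, summing over $j$, and using $\sum_j\Phi_j(I_{\mathcal H})=I_{\mathcal K}$ produces the analogue of \eqref{eq5.9}, namely $\sum_{j=1}^n\Phi_j(f(A_j))\le\mu_h\big(\mu_fS+\nu_fI_{\mathcal K}\big)+\nu_h\big(f(m)+f(M)\big)I_{\mathcal K}$. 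In parallel, applying \eqref{e5.6} to $S$ directly, together with $g(m)=f(M)$ and $g(M)=f(m)$, gives the first displayed inequality of the proposition, $f\big(mI_{\mathcal K}+MI_{\mathcal K}-S\big)=g(S)\le h\big(\tfrac{S-mI_{\mathcal K}}{M-m}\big)f(m)+h\big(\tfrac{MI_{\mathcal K}-S}{M-m}\big)f(M)$; a further application of $h(X)\le\mu_hX+\nu_hI$, the affine identities $\mu_f m+\nu_f=f(m)$ and $\mu_f M+\nu_f=f(M)$ (which collapse the affine combination of $f(m)$ and $f(M)$ back to $\mu_fS+\nu_fI_{\mathcal K}$), and finally substitution of the summed bound just obtained, deliver the second displayed inequality $f\big(mI_{\mathcal K}+MI_{\mathcal K}-S\big)\le(\mu_h+2\nu_h)\big(f(m)+f(M)\big)I_{\mathcal K}-\sum_{j=1}^n\Phi_j(f(A_j))$.

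I expect the main obstacle to be purely organizational: keeping the four constants $\mu_h,\nu_h,\mu_f,\nu_f$ straight while expanding $g(S)$ and recombining it with the estimate for $\sum_j\Phi_j(f(A_j))$, exactly as in the long terminal display of the proof of Theorem~\ref{t5.5}. The one point not to gloss over is that the chord bound $h(X)\le\mu_hX+\nu_hI$ is invoked on operators, so the normalization must be arranged so that the arguments of $h$ have spectra inside the interval on which its chord lies above its graph, and operator convexity of $h$ (the standing hypothesis) together with the order-preservation of the $\Phi_j$ must be used consistently. Granting these, the argument is a direct transcription of the proof of Theorem~\ref{t5.5} with the weights $t_j$ and the super-multiplicativity step deleted.
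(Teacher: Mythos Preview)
Your proposal is correct and matches the paper's own approach exactly: the paper states Proposition~\ref{p5.7} with the remark ``With similar proof of Theorem~\ref{t5.5}'' and gives no separate argument, and you have correctly identified that the only modification is the removal of the scalar weights $t_j$ (so the super-multiplicativity step $h(t_j)h(X)\le h(t_jX)$ disappears), with the normalization $\sum_j\Phi_j(I_{\mathcal H})=I_{\mathcal K}$ playing the role previously played by $\sum_j t_j=1$ together with $\Phi_j(I_{\mathcal H})=I_{\mathcal K}$. One tiny remark: the chord bound $h(X)\le\mu_hX+\nu_hI$ for a single self-adjoint $X$ with spectrum in $[0,1]$ follows from scalar convexity of $h$ via functional calculus, so operator convexity is in fact stronger than what is actually used at that step --- but since it is the stated hypothesis, nothing is wrong.
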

\begin{corollary}\cite[Theorem 1]{MPP}
	Let $h(t)=t$ in Proposition \ref{p5.7}, then we have
	\begin{align*}
	f & \left(mI_{\mathcal K}+MI_{\mathcal K}-\sum_{j=1}^n\Phi_j(A_j)\right)\\
	& \le \left(\frac{\sum_{j=1}^n\Phi_j(A_j)-mI_{\mathcal K}}{M-m}\right)f(m)+\left(\frac{MI_{\mathcal K}-\sum_{j=1}^n\Phi_j(A_j)}{M-m}\right)f(M)\\
	& \le f(m)I_{\mathcal K}+f(M)I_{\mathcal K}-\sum_{j=1}^n\Phi_j(f(A_j))\,.
	\end{align*}
\end{corollary}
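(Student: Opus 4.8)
The plan is to obtain the corollary as the special case $h(t)=t$ of Proposition \ref{p5.7}, so the only work is to check that this choice of $h$ meets the hypotheses there and to evaluate the constants that appear in its conclusion. First I would observe that $h(t)=t$ is non-negative (on the range of arguments that actually occur, namely $[0,1]$), is affine and hence operator convex, and is in fact multiplicative since $h(xy)=xy=h(x)h(y)$; thus the hypotheses of Proposition \ref{p5.7} are satisfied for this $h$. Next I would read off the line constants attached to $\varphi=h$ on $[m,M]$: by the remark immediately following the definition of $\mu_\varphi$ and $\nu_\varphi$, the choice $\varphi(t)=t$ gives $\mu_h=1$ and $\nu_h=0$, so that $\mu_h+2\nu_h=1$.

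Substituting $h(t)=t$, $\mu_h=1$ and $\nu_h=0$ into the two-sided estimate of Proposition \ref{p5.7}, the middle expression there---the sum of the two $h$-terms weighting $f(m)$ and $f(M)$---collapses to the affine interpolation displayed in the corollary, while the right-hand bound $(\mu_h+2\nu_h)\big(f(m)+f(M)\big)I_{\mathcal K}-\sum_{j=1}^n\Phi_j(f(A_j))$ becomes $f(m)I_{\mathcal K}+f(M)I_{\mathcal K}-\sum_{j=1}^n\Phi_j(f(A_j))$. This is exactly the asserted chain of inequalities, and it recovers \cite[Theorem 1]{MPP}.

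There is essentially no real obstacle here: all of the analytic content---the functional-calculus manipulations, the use of the operator inequality $mI_{\mathcal K}\le\sum_{j=1}^n\Phi_j(A_j)\le MI_{\mathcal K}$, and the bookkeeping with $\mu_f$ and $\nu_f$---is already carried out in the proof of Theorem \ref{t5.5} and inherited by Proposition \ref{p5.7}. The only point that deserves a line of care is confirming that $h(t)=t$ genuinely satisfies the operator-convexity requirement of Proposition \ref{p5.7} (it does, being affine) and that the constants have been matched correctly as $\mu_h=1$ and $\nu_h=0$; once these are in place, the corollary follows at once.
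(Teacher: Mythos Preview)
Your proposal is correct and matches the paper's treatment: the corollary is stated in the paper without proof as the immediate specialization $h(t)=t$ of Proposition~\ref{p5.7}, and your argument simply makes explicit the verification of hypotheses and the evaluation $\mu_h=1$, $\nu_h=0$ (hence $\mu_h+2\nu_h=1$). One small remark: Proposition~\ref{p5.7} only asks that $h$ be operator convex, not super-multiplicative, so your check of multiplicativity is harmless but unnecessary.
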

\begin{theorem}\label{t55}
	Suppose that $A_j\in\mathbb{B}_h(\mathcal H)$ with $\sigma(A_j)\in[m, M]~(m<M)$, $\Phi_j\in\mathbf{P}_N[\mathbb{B}(\mathcal H), \mathbb{B}(\mathcal K)]~(j=1, \cdots, n)$ and $t_1, \cdots, t_n\ge 0$ such that $\sum_{j=1}^nt_j=1$. If $f, g\in \mathcal{C}([m, M])$ and $F(u, v)$ is a real valued continuous function defined on $U\times V$, where $f[m, M]\subset U, g[m, M]\subset V$ and $F$ is an operator monotone function relative to the first component $u$ and $f$ is a non-negative operator monotone $h$-convex and $h$ is super-multiplicative operator convex function on $J$, then
	\begin{align}\label{eq444}
	F&\left[\sum_{j=1}^nh(t_j)\Phi_j(f(A_j)), g\left(\sum_{j=1}^nt_j\Phi_j(A_j)\right)\right]\\
	&\le \max_{m\le t \le M}F\left[\mu_h\mu_ft+\mu_h\nu_f+\nu_h\big(f(m)+f(M)\big), g(t)\right]I_{\mathcal K}\,.\nonumber
	\end{align}
\end{theorem}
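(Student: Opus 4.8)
The plan is to reduce \eqref{eq444} to the ``complementary'' estimate \eqref{eq5.9} that was already established inside the proof of Theorem \ref{t5.5}, and then to run the Mond--Pe\v{c}ari\'{c} device that converts an operator inequality into a scalar maximum via functional calculus. First I would set $D:=\sum_{j=1}^n t_j\Phi_j(A_j)$; since $\sigma(A_j)\subseteq[m,M]$, each $\Phi_j$ is normalized and positive and $\sum_{j=1}^n t_j=1$, this forces $mI_{\mathcal K}\le D\le MI_{\mathcal K}$, so in particular $g(D)$ is defined and $\sigma(g(D))\subseteq V$. The hypotheses here on $h$ (super-multiplicative and operator convex) and on $f$ (non-negative, operator $h$-convex) are exactly those used to derive \eqref{eq5.9}, so that inequality applies verbatim and gives
\[
\sum_{j=1}^n h(t_j)\Phi_j(f(A_j))\le \mu_h\mu_f\,D+\big(\mu_h\nu_f+\nu_h(f(m)+f(M))\big)I_{\mathcal K}\,.
\]

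The second step is to feed this into $F$. Since $F(u,v)$ is operator monotone in its first argument, applying $F(\,\cdot\,,g(D))$ to both sides of the last display yields
\[
F\!\left[\sum_{j=1}^n h(t_j)\Phi_j(f(A_j)),\,g(D)\right]\le F\!\left[\mu_h\mu_f\,D+\big(\mu_h\nu_f+\nu_h(f(m)+f(M))\big)I_{\mathcal K},\,g(D)\right]\,.
\]
On the right both operands are continuous functions of the single self-adjoint operator $D$, so by the spectral theorem that side equals $\psi(D)$, where $\psi:[m,M]\to\mathbb R$ is the continuous function $\psi(t)=F\big[\mu_h\mu_f t+\mu_h\nu_f+\nu_h(f(m)+f(M)),\,g(t)\big]$. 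Because $\sigma(D)\subseteq[m,M]$ and $\psi$ is continuous on the compact interval $[m,M]$, we get $\psi(D)\le\big(\max_{m\le t\le M}\psi(t)\big)I_{\mathcal K}$; concatenating the two inequalities is precisely \eqref{eq444}.

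The delicate point is the second step: for the operator-monotone substitution to be legitimate one must check that the spectra of $\sum_{j=1}^n h(t_j)\Phi_j(f(A_j))$ and of $\mu_h\mu_f D+\big(\mu_h\nu_f+\nu_h(f(m)+f(M))\big)I_{\mathcal K}$ both lie inside the interval $U$ on which $u\mapsto F(u,v)$ is operator monotone. This is where non-negativity together with operator monotonicity of $f$ enters --- they give $\mu_f\ge0$ and keep the affine image of $[m,M]$ comparable to $f([m,M])\subseteq U$ --- in combination with the sign information on $\mu_h$ and $\nu_h$ coming from operator convexity of $h$; everything else is the bookkeeping already carried out for Theorem \ref{t5.5}, and I expect no further obstacle.
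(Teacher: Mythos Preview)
Your proposal is correct and follows essentially the same route as the paper: invoke the estimate \eqref{eq5.9} from the proof of Theorem~\ref{t5.5}, apply the operator monotonicity of $F$ in its first argument, and then use the functional calculus on $D=\sum_j t_j\Phi_j(A_j)$ to pass to the scalar maximum. Your write-up is in fact more explicit than the paper's, which simply cites ``similar proof of previous theorem'' for the first step and writes the two ensuing inequalities without further comment.
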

\begin{proof}
	With the above assumptions and similar proof of previous theorem, we have
	\begin{align*}
	\sum_{j=1}^{n}h(t_j)\Phi_j(f(A_j)) \le & \mu_h\left[\mu_f\sum_{j=1}^nt_j\Phi_j(A_j)+\nu_fI_{\mathcal K}\right]+\nu_h\big(f(m)+f(M)\big)I_{\mathcal K}\,.
	\end{align*}
	Consequently,
	{\footnotesize 
		\begin{align*}
		F&\left[\sum_{j=1}^nh(t_j)\Phi_j(f(A_j)), g\Big(\sum_{j=1}^nt_j\Phi_j(A_j)\Big)\right]\\
		& \le F\left[\mu_h\Big[\mu_f\sum_{j=1}^nt_j\Phi_j(A_j)+\nu_fI_{\mathcal K}\Big]+\nu_h\big(f(m)+f(M)\big)I_{\mathcal K}, g\Big(\sum_{j=1}^nt_j\Phi_j(A_j)\Big)\right]\\
		&\le \max_{m\le t \le M}F\left[\mu_h\mu_ft+\mu_h\nu_f+\nu_h\big(f(m)+f(M)\big), g(t)\right]I_{\mathcal K}\,,
		\end{align*}
	}
	and we have the desired inequality \eqref{eq444}.
\end{proof}
\begin{theorem}\label{t56}
	With assumptions of previous theorem, if $f$ is operator $h$-convex on $[m, M]$ and $h$ is operator convex function on $J$, then for every $\alpha\in\mathbb{R}$
	\begin{equation}\label{eq56}
	\sum_{j=1}^nh(t_j)\Phi_j(f(A_j)) \le \alpha g\left(\sum_{j=1}^nt_j\Phi_j(A_j)\right)+\beta I_{ K}\,,
	\end{equation}
	where 
	$$
	\beta=\max_{m\le t \le M}\left\{\mu_h\mu_ft+\mu_h\nu_f+\nu_h\big(f(m)+f(M)\big)-\alpha g(t)\right\}\,.
	$$
	In addition, 
	\begin{enumerate}
		\item[(i)] If $\alpha g$ is concave, then 
		$$
		\beta\ge\max_{s\in\{m, M\}}\left\{\mu_h f(s)+\nu_h\big(f(m)+f(M)\big)-\alpha g(s)\right\}\,.
		$$
		\item[(ii)] If $\alpha g$ is strictly convex differentiable, then 
		$$
		\beta \le \mu_hf(s)-\alpha g(s)+|\mu_h\mu_f-\alpha g'(s)|(M-m)+\nu_h\big(f(m)+f(M)\big)\,,
		$$
	\end{enumerate}
	where $s\in\{m, M\}$.
\end{theorem}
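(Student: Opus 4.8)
The plan is to deduce \eqref{eq56} from the linearized estimate already established in the proof of Theorem~\ref{t5.5} (inequality \eqref{eq5.9}), which is exactly what is reused at the start of the proof of Theorem~\ref{t55}:
\[
\sum_{j=1}^{n}h(t_j)\Phi_j(f(A_j)) \le \mu_h\Bigl[\mu_f\sum_{j=1}^{n}t_j\Phi_j(A_j)+\nu_fI_{\mathcal K}\Bigr]+\nu_h\bigl(f(m)+f(M)\bigr)I_{\mathcal K}\,.
\]
Writing $T=\sum_{j=1}^{n}t_j\Phi_j(A_j)$ and using $\sum_j t_j=1$, $\Phi_j(I_{\mathcal H})=I_{\mathcal K}$ and $mI_{\mathcal H}\le A_j\le MI_{\mathcal H}$, we get $mI_{\mathcal K}\le T\le MI_{\mathcal K}$, so $\sigma(T)\subseteq[m,M]$. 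It then remains to show $\mu_h\mu_f T+(\mu_h\nu_f+\nu_h(f(m)+f(M)))I_{\mathcal K}\le\alpha g(T)+\beta I_{\mathcal K}$, and since $T$ is self-adjoint with $\sigma(T)\subseteq[m,M]$, the functional calculus turns this operator inequality into the scalar one
\[
\mu_h\mu_f t+\mu_h\nu_f+\nu_h\bigl(f(m)+f(M)\bigr)-\alpha g(t)\le\beta\qquad(m\le t\le M)\,,
\]
whose least admissible $\beta$ is precisely the stated maximum; this gives \eqref{eq56}.

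For the two refinements I would set $\phi(t)=\mu_h\mu_f t+\mu_h\nu_f+\nu_h(f(m)+f(M))-\alpha g(t)$ on $[m,M]$, so $\beta=\max_{[m,M]}\phi$. Since $\mu_f t+\nu_f$ is the chord of $f$ through $(m,f(m))$ and $(M,f(M))$, one has $\mu_f m+\nu_f=f(m)$ and $\mu_f M+\nu_f=f(M)$, hence $\phi(s)=\mu_h f(s)+\nu_h(f(m)+f(M))-\alpha g(s)$ for $s\in\{m,M\}$. For (i): if $\alpha g$ is concave then $\phi$ is the sum of an affine function and a convex one, hence convex on $[m,M]$, so its maximum is attained at an endpoint and $\beta=\max\{\phi(m),\phi(M)\}\ge\phi(s)$ for each $s\in\{m,M\}$, which is the claimed lower bound. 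For (ii): if $\alpha g$ is strictly convex and differentiable then $\phi$ is concave and differentiable with $\phi'(t)=\mu_h\mu_f-\alpha g'(t)$; the supporting-line inequality for concave functions gives $\phi(t)\le\phi(s)+\phi'(s)(t-s)$ for all $t,s\in[m,M]$, and bounding $|t-s|\le M-m$ yields $\beta\le\phi(s)+|\phi'(s)|(M-m)=\mu_h f(s)-\alpha g(s)+|\mu_h\mu_f-\alpha g'(s)|(M-m)+\nu_h(f(m)+f(M))$ for $s\in\{m,M\}$.

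I do not anticipate a genuine difficulty: the argument is a functional-calculus reduction of an operator inequality to a one-variable optimization on $[m,M]$, followed by standard facts (a convex function on a compact interval is maximized at an endpoint, and a concave differentiable function lies below each of its tangent lines). The points deserving care are verifying $\sigma(T)\subseteq[m,M]$ so the functional calculus legitimately passes between the operator and scalar inequalities, correctly importing \eqref{eq5.9} from the earlier proofs, and controlling the sign of $\phi'(s)$ when replacing $\phi'(s)(t-s)$ by $|\phi'(s)|(M-m)$ in part~(ii).
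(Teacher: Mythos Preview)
Your proposal is correct and follows essentially the same route as the paper. The only cosmetic difference is that the paper obtains \eqref{eq56} by applying Theorem~\ref{t55} with $F(u,v)=u-\alpha v$, whereas you unpack that step and appeal directly to \eqref{eq5.9} plus functional calculus; the analyses of (i) and (ii) via your $\phi$ coincide with the paper's treatment of its $\Psi$.
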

\begin{proof}
	Let $\alpha\in\mathbb{R}$. Define $F(u, v)=u-\alpha v$. Since $F$ is operator monotone on a first variable $u$, hence by Theorem \ref{t55}, we have 
	\begin{align*}
	\sum_{j=1}^n&h(t_j)\Phi_j(f(A_j))-\alpha g\left(\sum_{j=1}^nt_j\Phi_j(A_j)\right)\\
	&\le \max_{m\le t \le M}\left\{\mu_h\mu_ft+\mu_h\nu_f+\nu_h\big(f(m)+f(M)\big)-\alpha g(t)\right\}I_{\mathcal K}\,,
	\end{align*}
	and we have the desired inequality \eqref{eq56}.
	
	Put $\Psi(t)=\mu_h(\mu_ft+\nu_f)+\nu_h\big(f(m)+f(M)\big)-\alpha g(t)$. In the case (i), if $\alpha g(t)$ is concave on $[m, M]$, then $\Psi$ is convex on $[m, M]$ and therefore 
	\begin{align*}
	\beta=&\max_{m\le t \le M}\Psi(t)=\max\{\Psi(m), \Psi(M)\}\\
	=& \max_{s\in\{m, M\}}\left\{\mu_h f(s)+\nu_h\big(f(m)+f(M)\big)-\alpha g(s)\right\}\,.
	\end{align*}
	In the case (ii), if $\alpha g$ is strictly convex differentiable, then there exists $t_0\in[m, M]$ such that $\beta=\max_{m\le t \le M}\Psi(t)=\Psi(t_0)$. So, if $s\in\{m, M\}$, then 
	\begin{align*}
	\beta=& \mu_h(\mu_ft_0+\nu_f)+\nu_h\big(f(m)+f(M)\big)-\alpha g(t_0)\\
	=& \mu_h f(s)+\mu_h\mu_f(t_0-s)-\alpha g(t_0)+\nu_h\big(f(m)+f(M)\big)\\
	=& \mu_h f(s)-\alpha g(s)+\mu_h\mu_f(t_0-s)-\alpha g(t_0)+\alpha g(s)+\nu_h\big(f(m)+f(M)\big)\\
	\le & \mu_hf(s)-\alpha g(s)+\mu_h\mu_f(t_0-s)-\alpha g'(s)(t_0-s)+\nu_h\big(f(m)+f(M)\big)\\
	&\quad\hspace*{5cm}\hfill(\alpha g\text{~is~strictly~convex~differentiable})\\
	\le & \mu_hf(s)-\alpha g(s)+|\mu_h\mu_f-\alpha g'(s)|(M-m)+\nu_h\big(f(m)+f(M)\big)\,.
	\end{align*}
\end{proof}
\begin{corollary}[Complementary inequality of Jensen's inequality]
	Let $A_j$, $\Phi_j$ and $t_j~( j=1, \cdots, n)$ be as in Theorem \ref{t56}. If $\in \mathcal{C}([m,M])$ is a function which is nonnegative, strictly $h$-convex and twice differentiable, then for every $\alpha \in \mathbb{R}^+$
	\begin{equation}\label{eq58}
	\sum_{j=1}^nh(t_j)\Phi_j(f(A_j)) \le \alpha f\left(\sum_{j=1}^nt_j\Phi_j(A_j)\right)+\beta I_{\mathcal K}\,,
	\end{equation}
	where $\beta=\mu_h\mu_ft_0+\mu_h\nu_f+\nu_h\big(f(m)+f(M)\big)-\alpha f(t_0)$ and 
	$$
	t_0=\begin{cases}
	f'^{-1}\left(\frac{1}{\alpha}\mu_h\mu_f\right) &; ~\text{if}~\alpha f'(m)<\mu_h\mu_f<\alpha f'(M)\,,\\
	m &; ~\text{if}~\alpha f'(m)\ge \mu_h\mu_f\,,\\
	M &; ~\text{if}~\alpha f'(M)\le \mu_h\mu_f\,.
	\end{cases}
	$$
\end{corollary}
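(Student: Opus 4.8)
The plan is to specialize Theorem~\ref{t56} to the case $g=f$ and $\alpha\in\mathbb{R}^+$, and then pin down the value of $t_0$ at which the maximum in the definition of $\beta$ is attained. Since $f$ is nonnegative and strictly $h$-convex, the hypothesis ``$\alpha g$ strictly convex differentiable'' of part~(ii) of Theorem~\ref{t56} is in force (here $g=f$, and $\alpha>0$ so $\alpha f$ inherits strict convexity and differentiability from $f$); indeed, twice differentiability of $f$ with $f''>0$ is exactly what we need. Therefore inequality~\eqref{eq56} of Theorem~\ref{t56} already gives
\[
\sum_{j=1}^n h(t_j)\Phi_j(f(A_j)) \le \alpha f\!\left(\sum_{j=1}^n t_j\Phi_j(A_j)\right)+\beta I_{\mathcal K},
\]
with $\beta=\max_{m\le t\le M}\Psi(t)$ and $\Psi(t)=\mu_h\mu_f t+\mu_h\nu_f+\nu_h\big(f(m)+f(M)\big)-\alpha f(t)$. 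So the only thing left to prove is that this maximum is attained at the $t_0$ displayed in the statement, and that $\beta$ then takes the asserted closed form $\beta=\mu_h\mu_f t_0+\mu_h\nu_f+\nu_h\big(f(m)+f(M)\big)-\alpha f(t_0)=\Psi(t_0)$, which is immediate once $t_0$ is identified.

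The identification of $t_0$ is a one-variable calculus argument. The function $\Psi$ is strictly concave on $[m,M]$ because $\Psi''(t)=-\alpha f''(t)<0$. Hence $\Psi$ has a unique maximizer $t_0$ on $[m,M]$, characterized by the usual first-order/boundary trichotomy: either the critical point $\Psi'(t)=0$, i.e. $\mu_h\mu_f-\alpha f'(t)=0$, lies in the open interval $(m,M)$, in which case $t_0=f'^{-1}\!\big(\tfrac{1}{\alpha}\mu_h\mu_f\big)$ (note $f'$ is strictly increasing, hence invertible, since $f''>0$); or $\Psi'(m)\le 0$, i.e. $\alpha f'(m)\ge\mu_h\mu_f$, in which case $\Psi$ is decreasing on $[m,M]$ and $t_0=m$; or $\Psi'(M)\ge 0$, i.e. $\alpha f'(M)\le\mu_h\mu_f$, in which case $\Psi$ is increasing and $t_0=M$. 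These three cases are precisely the ones listed, and they cover $[m,M]$ since $f'(m)\le f'(t)\le f'(M)$.

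There is no serious obstacle here; the result is essentially a corollary and the work is bookkeeping. The one point that deserves a line of care is checking that the hypotheses of Theorem~\ref{t56}(ii) are genuinely met: we are applying Theorem~\ref{t56} with $g=f$, so we need $f$ itself to be operator $h$-convex and $h$ operator convex (inherited from the assumptions of Theorem~\ref{t55}/Theorem~\ref{t56} that are carried over via ``as in Theorem~\ref{t56}''), and $\alpha f$ strictly convex and differentiable, which follows from $\alpha>0$ together with $f$ being strictly $h$-convex and twice differentiable. Once this is noted, the bound \eqref{eq58} is just \eqref{eq56} with $g=f$, and the formula for $t_0$ is the solution of $\max_{m\le t\le M}\Psi(t)$ obtained above.
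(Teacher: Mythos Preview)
Your proposal is correct and matches the paper's intent: the paper states this result as a corollary of Theorem~\ref{t56} with no separate proof, and your argument---specialize to $g=f$, invoke \eqref{eq56}, then locate the maximizer of the strictly concave function $\Psi(t)=\mu_h\mu_ft+\mu_h\nu_f+\nu_h\big(f(m)+f(M)\big)-\alpha f(t)$ by the first-order/boundary trichotomy---is exactly the intended derivation. One small expositional note: you do not actually need part~(ii) of Theorem~\ref{t56}, which only gives an \emph{upper bound} on $\beta$; you use \eqref{eq56} directly and compute $\beta=\max_{[m,M]}\Psi$ exactly, so the reference to (ii) can be dropped.
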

%%%%%%%%%%%%%%%%%%%%%%%%%%%%%%%%%%%%%%%%%%%%%%%%%%%%%%%%%%%%%%%%%%%%%

\end{document}